\documentclass[11pt]{amsart}

\usepackage{tikz}
\usepackage{amssymb}
\usepackage{float}





\hfuzz1pc 


\newtheorem{theorem}{Theorem}[section]
\newtheorem{corollary}[theorem]{Corollary}
\newtheorem{lemma}[theorem]{Lemma}

\newtheorem{conjecture}[theorem]{Conjecture}

\newtheorem{maintheorem}{Theorem}

\theoremstyle{definition}

\theoremstyle{remark}

\newtheorem{case}{Case}

\numberwithin{equation}{section}
\numberwithin{figure}{section}



\newcommand{\Z}{\mathbb Z}

\newcommand{\eps}{\varepsilon}

\newcommand{\U}{\mathcal U}
\newcommand{\cR}{\mathcal R}
\newcommand{\diff}{\operatorname{Diff}}
\newcommand{\phc}{\operatorname{Phc}}

\newcommand{\transv}{\pitchfork}
\newcommand{\zeroeq}{\stackrel{\circ}{=}}

\DeclareMathOperator{\nuh}{Nuh}
\DeclareMathOperator{\pper}{Per}


\newcommand{\eval}[2][\right]{\relax
  \ifx#1\right\relax \left.\fi#2#1\rvert}




\begin{document}
\title[Minimality and stable Bernoulliness]{Minimality and stable Bernoulliness in dimension 3}

\author[Gabriel N\'u\~nez]{Gabriel N\'u\~nez}
\address{Gabriel N\'u\~nez, IMERL, Facultad de Ingenier\'ia,
Universidad de la Rep\'ublica,
Julio Herrera y Reissig 565,
11.300 Montevideo, Uruguay. \newline
Departamento de matem\'atica, 
Facultad de Ingenier\'ia y Tecnolog\'ias,
Universidad Cat\'olica del Uruguay, 
Comandante Braga 2715, 
11.600 Montevideo, Uruguay.
}
\email{fnunez@fing.edu.uy}
\author[Jana Rodriguez Hertz]{Jana Rodriguez Hertz}
\address{Jana Rodriguez Hertz,
Department of Mathematics, 
Southern University of Science and Technology of China.
No 1088, xueyuan Rd., Xili, Nanshan District, Shenzhen,Guangdong, China 518055.}
\email{rhertz@sustc.edu.cn}

\begin{thanks}{GN  was supported by Agencia Nacional de Investigaci\'on e Innovaci\'on. The research that gives rise to the results presented in this publication received funds from the Agencia Nacional de Investigaci\'on e Innovaci\'on under the code POS\_NAC\_2014\_1\_102348}
\end{thanks}
\begin{thanks}
{JRH was supported by SUSTech and NSFC 11871262} 
\end{thanks}
\keywords{ Stable ergodicity, minimal foliation, stable minimality}

\begin{abstract}
In 3-dimensional manifolds, we prove that generically in $\diff^{1}_{m}(M^{3})$, the existence of a minimal expanding invariant foliation implies stable Bernoulliness.
\end{abstract}
\maketitle

\section{Introduction}
\subsection{Historical context}
Let $M$ be a smooth compact manifold and let $m$ be a smooth volume measure. A diffeomorphism $f: M \rightarrow M$ is {\em ergodic} if the Birkhoff's limits
$$\tilde{\varphi}(x)=\lim_{n\rightarrow\infty}\frac{1}{n}\sum\limits_{j=0}^{n-1} \varphi \circ f^{j}(x)$$
are constant for $m$-almost every $x \in M$ for all $\varphi:M \rightarrow \mathbb{R}$ continuous function.\par
%
In 1939 E. Hopf \cite{Hopf1939} proved that the geodesic flow of a surface with negative sectional curvature is ergodic with respect to the Liouville measure. This result was generalized by D. Anosov \cite{AN1969}, who in the late sixties showed that conservative $C^2$-Anosov diffeomorphisms (and flows) are $C^1$ {\em stably ergodic}. A $C^2$ conservative diffeomorphism is stably ergodic if there exists a $C^1$ neighborhood $\mathcal{U}\subset\diff^{1}_{m}(M)$ such that every conservative $C^2$ element in $\mathcal{U}$ is ergodic. Similarly for the case of flows. A crucial tool in the proof is the absolute continuity of the stable and unstable foliations, due to D. Anosov and Ya. Sinai \cite{AS1967}. It is not known yet whether $C^1$-Anosov diffeomorphisms are necessarily ergodic.\par
Until 1993, Anosov diffeomorphisms were the only known conservative examples of stably ergodic diffeomorphisms, but Grayson, Pugh and Shub showed that the time-one map of the geodesic flow of a surface of constant negative curvature is stably ergodic \cite{GPS1994}. This result was generalized by Wilkinson, for variable negative curvature in \cite{wilkinson1998}.\par
It is natural to look for mechanisms that activate stable ergodicity for a smooth volume measure.  In 1995, Pugh and Shub proposed that ``a little hyperbolicity goes a long way toward guaranteeing stable ergodicity'', and they conjectured that {\em partially hyperbolic diffeomorphisms} are $C^{r}$-generically stably ergodic. The Pugh-Shub conjecture was stated in the International Congress on Dynamical Systems, held in Montevideo in 1995, in the memory of Ricardo Ma\~n\'e \cite{PS1995}. It has become a very active topic of research.\par
A partially hyperbolic diffeomorphism satisfies that the tangent bundle $TM$ splits into three $Df$-invariant subbundles $E^u \oplus E^c \oplus E^s$, where $Df$ is expanding on $E^u$ (called the unstable bundle), contracting on $E^s$ (the stable bundle) and  has an intermediate behavior on $E^c$ (the central bundle), that is, vectors in $E^{c}$ may be expanded or contracted by $Df$, but not as strongly as in $E^{u}$ or $E^{s}$, respectively.  See Section \ref{section.definitions} for a precise definition.\par

The Pugh-Shub conjecture was proved for $r=\infty$ by F. Rodriguez Hertz, J. Rodriguez Hertz and R. Ures \cite{HHU2008} when the central subbundle is one dimensional. For $r=1$ and any dimension of central subbundle, it was proven by A. Avila, S. Crovisier y A. Wilkinson \cite{ACW2017}. The conjecture remains open for $r=2$, that is, it is not known yet whether stably ergodic diffeomorphisms are $C^{2}$-dense among volume preserving diffeomorphisms.\par

Since Pugh and Shub proposed ``a little hyperbolicity'' as a mechanism activating stable ergodicity, a natural question is how far we could go. Is partial hyperbolicity the least we can ask? Does stable ergodicity imply partial hyperbolicity? The answer is no. A. Tahzibi  gave an example of a stably ergodic diffeomorphims which is not partially hyperbolic  in his Ph.D. Thesis \cite{T2004}. The map, a diffeomorphism of $T^4$, was introduced before by Bonatti-Viana in \cite{BV2000}. Even though the map is not partially hyperbolic it has a {\em dominated splitting}, that is: the tangent bundle over $M$ splits into two $Df$-invariant subbundles $TM= E \oplus F$ such that given any $x\in M$, any unitary vectors $v_E \in E(x)$ and $v_F \in F(x)$:
$$\parallel Df^{N}(x) (v_E) \parallel \leq \frac{1}{2}\parallel Df^{N}(x) (v_F) \parallel$$
for some $N>0$ independent of $x$.\newline\par 
Is this the least we can ask to obtain stable ergodicity? The following conjecture suggests that generically in $\diff^{1}_{m}(M)$, it is. 
Remember that, after Theorem \ref{teo.jana.acw}, having a dominated splitting is generically equivalent to having positive metric entropy with respect to Lebesgue measure:
\begin{conjecture}\label{conjecture.stable.ergodicity}
 Generically in $\diff^{1}_{m}(M)$, if $f$ has positive metric entropy with respect to Lebesgue measure, then $f$ is stably ergodic. 
\end{conjecture}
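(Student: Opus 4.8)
The plan is to approach the conjecture through the two-step reduction that the preceding remark already sets up. \emph{Step 1: reduce to dominated splittings.} By Theorem~\ref{teo.jana.acw}, the hypothesis ``positive metric entropy with respect to Lebesgue'' can be replaced, on a residual subset of $\diff^{1}_{m}(M)$, by the $C^1$-open hypothesis ``$f$ carries a nontrivial $Df$-invariant dominated splitting $TM = E\oplus F$''. Since domination is robust, stable ergodicity on an open set will be automatic once plain ergodicity is established on the residual set, so it is enough to prove that $C^1$-generically, among diffeomorphisms with a dominated splitting, $f$ is ergodic.

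\emph{Step 2: produce a minimal expanding invariant foliation.} I would first pass to the finest dominated splitting and, replacing $f$ by $f^{-1}$ if necessary, concentrate on its most expanded extremal sub-bundle $F$. Positivity of entropy forces (via Ruelle's inequality) the top Lyapunov exponent to be positive on a set of positive measure; the hope is that, $C^1$-generically, a perturbation makes $F$ \emph{uniformly} expanding. A uniformly expanding bundle is uniquely integrable, which produces an expanding $Df$-invariant foliation $\mathcal{F}$. Minimality of $\mathcal{F}$ should then follow from a further perturbation argument --- $C^1$-small perturbations localized near periodic orbits, combined with a connecting/closing argument in the spirit of the accessibility theory for partially hyperbolic systems, making the leaves of $\mathcal{F}$ dense. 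With a minimal expanding invariant foliation in hand, the main theorem of this paper (in dimension $3$) upgrades it to stable Bernoulliness, hence to stable ergodicity, finishing the argument in dimension $3$; higher dimensions would require the analogous ``minimal expanding foliation $\Rightarrow$ stable ergodicity'' implication.

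The main obstacle is the first half of Step 2: one cannot in general \emph{manufacture} a minimal expanding invariant foliation out of positive entropy. Uniform expansion of an extremal bundle fails for a $C^1$-open class of examples --- the Bonatti--Viana--Tahzibi maps \cite{BV2000,T2004} carry a robustly non-hyperbolic dominated splitting with positive entropy --- so genericity must be used essentially: either one perturbs into the uniformly expanding regime, or, when that is impossible, one must treat the non-uniformly hyperbolic case by a different ergodicity mechanism (blenders, robust minimality of a sub-foliation, a direct Hopf-type argument, \dots). This is exactly the difficulty that the present paper sidesteps: rather than trying to create a minimal expanding invariant foliation, it takes the existence of one as a standing hypothesis and proves that, $C^1$-generically, that hypothesis alone already forces stable Bernoulliness --- the conjecture above then being the assertion that, after the reduction of Step 1, the hypothesis is generically available.
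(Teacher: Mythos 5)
The statement you are addressing is Conjecture~\ref{conjecture.stable.ergodicity}; the paper offers no proof of it, and it is stated precisely because it is open. Your proposal is therefore not being measured against an existing argument, and on its own terms it does not close the problem. Step~1 already contains a logical gap: after invoking Theorem~\ref{teo.jana.acw} to trade positive entropy for a dominated splitting, you assert that ``stable ergodicity on an open set will be automatic once plain ergodicity is established on the residual set.'' It is not. Stable ergodicity requires ergodicity of \emph{every} $C^{2}$ volume-preserving diffeomorphism in some $C^{1}$-neighborhood, and a residual set of ergodic maps gives no control over the non-generic $C^{2}$ perturbations. This is exactly the difficulty the paper's SB-criterion (Theorem~\ref{teo.no.generico}) is built to handle: its hypotheses are $C^{1}$-open, and the conclusion for all nearby $C^{2}$ maps is extracted from the upper semicontinuity of $g\mapsto\Gamma(g)$, Lemma~\ref{avila.bochi}, and the ergodicity criterion of Theorem~\ref{criterion_hhtu} --- not from genericity of ergodicity.

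Step~2 is where the proposal stops being a proof and becomes a restatement of the problem. Producing, $C^{1}$-generically, a minimal expanding (or contracting) invariant foliation from positive entropy is precisely Conjecture~\ref{conjecture.dim3}, and the combination ``Conjecture~\ref{conjecture.dim3} $+$ Theorem~\ref{teo.stable.bernoulli}'' is the intended (still conjectural) route to Conjecture~\ref{conjecture.stable.ergodicity} in dimension $3$. The specific mechanism you sketch --- perturb the extremal bundle of the finest dominated splitting until it is uniformly expanding, integrate it, then perturb again for minimality --- fails at its first step on a $C^{1}$-open set: the Bonatti--Viana/Tahzibi examples you yourself cite have a dominated splitting with no uniformly hyperbolic subbundle, robustly, and even in dimension $3$ the paper's Case~\ref{case1} shows that a two-dimensional $E^{+}$ may generically carry periodic complex eigenvalues, which obstructs any one-dimensional expanding subfoliation. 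Minimality of an expanding foliation, when one exists, is likewise not known to be generic. To your credit you identify this obstacle explicitly; but naming the gap does not fill it, so the proposal should be read as a correct description of the reduction the paper sets up, not as a proof of the conjecture.
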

If this conjecture were true, then Pugh and Shub statement would be true: generically either all Lyapunov exponents are zero $m$-almost everywhere, or else you have stable ergodicity. \par
For many years, we addressed this conjecture in the context of 3-manifolds. This context has a clear advantage: when there is a dominated splitting at least one of the invariant subbundles has to be hyperbolic due to the fact that $f$ preserves volume. This problem proved more difficult than originally thought. However, we found a condition that guarantees not only stable ergodicity but {\em stable Bernoulliness}. A diffeomorphism $f\in\diff^{1}_{m}(M)$ is {\em stably Bernoulli} if there exists a $C^{1}$-neighborhood $\U\subset\diff^{1}_{m}(M)$ of $f$ such that all $g\in\U\cap \diff^{2}_{m}(M)$ are Bernoulli, that is, are metrically isomorphic to a Bernoulli shift. With this definition a $C^{1}$ stably Bernoulli diffeomorphism might not be Bernoulli. We are aware that this is not a standard definition, but for practical reasons we will state it like this. Of course, a $C^{2}$ stably Bernoulli diffeomorphism is Bernoulli. \par
The mechanism we propose as a generic stable ergodicity activator is the minimality of an expanding or contracting invariant foliation. 
A foliation $W$ is {\em minimal} if every leaf $W(x)$ of $W$ is dense in $M$. An $f$-invariant foliation $W$ is {\em contracting} if $\|Df(x)|_{TW}\|<1$ for every $x\in M$. An $f$-invariant foliation is {\em expanding} if it is contracting for $f^{-1}$.\newline\par
\begin{maintheorem}{\label{teo.stable.bernoulli}} There exists a residual set $\cR$ in $\diff^{1}_{m}(M^3)$ such that for $f\in \cR$, if there exists a minimal expanding or contracting $f$-invariant foliation, then  $f$ is stably Bernoulli. 
\end{maintheorem}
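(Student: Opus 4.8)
\emph{Reduction to a single case.} First, the contracting case reduces to the expanding one: a minimal contracting $f$-invariant foliation is a minimal expanding $f^{-1}$-invariant foliation, $f$ is stably Bernoulli exactly when $f^{-1}$ is, and the residual set may be taken invariant under $f\mapsto f^{-1}$. So assume $W$ is a minimal expanding $f$-invariant foliation; since a compact manifold carries no uniformly expanding diffeomorphism, $\dim W\in\{1,2\}$. If $\dim W=2$, then $E^{cu}:=TW$ is a $2$-dimensional uniformly expanding subbundle; passing to the cocycle induced on the line bundle $TM/E^{cu}$ and using $|\det Df|\equiv 1$ together with the growth of both singular values of $Df^n|_{E^{cu}}$, one gets that the splitting $E^{s}\oplus E^{cu}$ is dominated and that $E^{s}$ is uniformly contracted; hence $f$ is an Anosov diffeomorphism with $1$-dimensional stable bundle. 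A conservative Anosov diffeomorphism is $C^1$-robustly Anosov and topologically mixing (a minimal unstable foliation forces this), so all of its $C^2$-conservative perturbations are Bernoulli, and $f$ is stably Bernoulli with no genericity needed. It remains to treat the case of a $1$-dimensional minimal expanding invariant foliation $W^u$, and we set $E^u:=TW^u$.

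\emph{A partially hyperbolic picture.} With $\dim E^u=1$ the cocycle on $TM/E^u$ is area-contracting but may fail to be dominated, so here genericity enters: by standard $C^1$-generic arguments (Ma\~n\'e's ergodic closing lemma, creation of dominated splittings from periodic data, and Theorem~\ref{teo.jana.acw}) arrange that for $f$ in a residual set either $TM/E^u$ is uniformly contracted---in which case $f$ is Anosov and we finish as above---or $f$ is partially hyperbolic, $TM=E^s\oplus E^c\oplus E^u$ with $1$-dimensional subbundles, $E^s$ uniformly contracted and $E^u=TW^u$ uniformly expanded. Record three robust features of this picture: partial hyperbolicity is $C^1$-open; $E^s$, $E^u$ and the laminations tangent to them persist under $C^1$ perturbation (stable/unstable manifold theorem); and $\lambda_s<0<\lambda_u$ with $\lambda_s+\lambda_c+\lambda_u=0$ by volume preservation.

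\emph{Ergodicity and the Bernoulli property.} For $C^2$ diffeomorphisms one then runs a Hopf-type argument: the uniformly expanding lamination $W^u$, being subordinate to a dominated splitting, is absolutely continuous (Anosov--Sinai), backward Birkhoff averages of continuous functions are constant along its leaves, and the minimality of $W^u$ upgrades the accessibility picture enough that---via the ergodicity criteria of Hertz--Hertz--Ures / Burns--Wilkinson for $1$-dimensional center---the diffeomorphism is ergodic. Generically the center exponent does not vanish on a set of positive measure, so the system is nonuniformly hyperbolic; Pesin theory then applies (this is where $C^2$ is genuinely used, so that ``stably Bernoulli'' only promises Bernoulliness of the $C^2$ neighbours), and ergodic $+$ nonuniformly hyperbolic $+$ topologically mixing (from minimality of $W^u$) yields the Bernoulli property through the Pesin--Ornstein--Weiss machinery. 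To obtain \emph{stable} Bernoulliness one repeats this for every $C^2$ diffeomorphism in a $C^1$-neighbourhood $\U$ of $f$: on $\U$ the partially hyperbolic structure, the lamination $W^u_g$ together with its absolute continuity, and $\lambda_s(g)<0$ all persist; what must be added is that on $\U$ the minimality of $W^u_g$ (or an adequate substitute furnishing essential accessibility) and the non-vanishing of $\lambda_c(g)$ persist as well.

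\emph{The main obstacle.} The crux is precisely the last point: neither ``the expanding foliation is minimal'' nor ``the center exponent is nonzero'' is a formally $C^1$-open condition, so this robustness cannot come for free. It must be extracted from the combination of $C^1$-genericity of $f$, the presence of the minimal expanding foliation, and the rigidity of volume-preserving systems on $3$-manifolds (where the dominated splitting leaves at most one potentially neutral exponent)---plausibly by showing that, $C^1$-generically, minimality of an expanding invariant foliation is robust and forces robust nonuniform hyperbolicity. I expect this robustness package to be the technical heart; the remaining ingredients (Hopf argument, Anosov--Sinai absolute continuity, Pesin theory, Ornstein--Weiss, the HHU/ACW criteria) are assembled from the literature.
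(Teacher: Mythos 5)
There is a genuine gap, and it sits exactly where the paper's real work is done. In your treatment of the case $\dim W=1$ you assert that, after generic arguments, either $TM/E^{u}$ is uniformly contracted (Anosov) or $f$ is partially hyperbolic with splitting $E^{s}\oplus E^{c}\oplus E^{u}$ into one-dimensional bundles. Neither alternative is forced: generically one only gets (Theorem~\ref{teo.jana.acw}) a dominated splitting $TM=E^{-}\oplus_{<}E^{+}$ with $E^{+}=TW$ one-dimensional and $E^{-}$ two-dimensional with both exponents negative almost everywhere, but $E^{-}$ need not be uniformly contracted nor admit any further dominated subsplitting. This non--partially hyperbolic situation (of Bonatti--Viana/Tahzibi type, discussed in the introduction) is precisely the case the theorem is designed to cover, and your proposal assumes it away. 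Consequently the Hopf/accessibility machinery you invoke (Anosov--Sinai absolute continuity of $W^{u}$ as a foliation subordinate to a partially hyperbolic splitting, Burns--Wilkinson/HHU accessibility criteria) is not available in the remaining case. Moreover, you correctly identify the ``main obstacle'' --- that minimality of $W^{u}$ and nonvanishing of the remaining exponents are not $C^{1}$-open --- but you leave it as an expectation rather than a proof, so the argument is incomplete even on its own terms.

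The paper resolves that obstacle without ever making minimality or hyperbolicity robust. The key object is $\Gamma(f)=M\setminus\phc^{u}(p)$: minimality of $W^{u}$ for $f$ alone forces $\Gamma(f)=\emptyset$ (every $u$-leaf is dense, hence meets $W^{s}(o(p))$ transversally), and $g\mapsto\Gamma(g)$ is upper semicontinuous, so the \emph{weaker, open} condition $\phc^{u}_{g}(p_{g})=M$ persists on a $C^{1}$-neighborhood even though minimality of $W^{u}_{g}$ may not. Combined with Lemma~\ref{avila.bochi} (Avila--Bochi), which gives $m(\phc^{-}(p_{g}))>0$ for all $C^{2}$ conservative $g$ nearby, the HHTU ergodicity criterion (Theorem~\ref{criterion_hhtu}) yields ergodicity and nonuniform hyperbolicity of every such $g$, and the homoclinic-relation argument (Corollary~\ref{corollary.homoclinically.related}) upgrades this to Bernoulli. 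This is the content of the SB-criterion (Theorem~\ref{teo.no.generico}), and it is the idea your proposal is missing. Your handling of the two-dimensional case (forcing Anosov from a uniformly expanding codimension-one invariant foliation plus volume preservation) is essentially sound and in fact more direct than the paper's route through \cite{DPU1999}, but it does not compensate for the gap in the one-dimensional case.
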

\vspace*{1em}\par
This condition might not strike in principle as ``a little hyperbolicity'', but in fact we do not know how frequently this condition is found among the volume preserving diffeomorphisms of 3-manifolds with a dominated splitting. We conjecture the following:
\begin{conjecture}\label{conjecture.dim3}
Generically in $\diff^{1}_{m}(M^{3})$, either all Lyapunov exponents are zero almost everywhere, or else there exists a minimal invariant expanding or contracting foliation.  
\end{conjecture}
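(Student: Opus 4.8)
The plan is to split along the generic dichotomy supplied by \thmref{teo.jana.acw} and then confront generic minimality of a strong foliation, which is the genuinely hard point. First I would pass to the residual set on which, by \thmref{teo.jana.acw}, $f$ either has all Lyapunov exponents equal to zero $m$-almost everywhere or admits a nontrivial dominated splitting $TM = E \oplus F$. In the first case the first alternative of the statement holds and there is nothing to prove, so I assume throughout that $f$ carries a dominated splitting.

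Next I would extract a hyperbolic bundle. As observed in the introduction, in dimension $3$ volume preservation forces at least one subbundle of a dominated splitting to be uniformly hyperbolic; since the three Lyapunov exponents sum to zero and the extremal exponents are separated from the central behaviour by domination, one may take this bundle to be one-dimensional, a uniformly contracting $E^s$ or a uniformly expanding $E^u$. By the stable/unstable manifold theorem such a bundle is uniquely integrable, yielding an $f$-invariant contracting foliation $W^s$ (respectively expanding $W^u$). This $W^s$ or $W^u$ is the candidate for the second alternative, so the whole problem reduces to showing that, generically, one such strong foliation is minimal.

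To reach minimality I would add two ingredients. By the conservative connecting lemma of Bonatti--Crovisier, $C^1$-generic volume-preserving diffeomorphisms are transitive, so after intersecting with a further residual set I may assume $f$ is transitive. The heart of the argument is then to upgrade transitivity to minimality of $W^u$ (or $W^s$) through the theory of minimal saturated sets: a nonempty closed set saturated by the expanding leaves exists and, by Zorn's lemma, contains a minimal one, and the objective is to prove that this minimal $u$-saturated set is generically all of $M$. Here I would deploy connecting-lemma perturbations---together with blender-type mechanisms adapted to the one-dimensional strong bundle---to merge distinct minimal saturated sets and to use a dense orbit to smear a single leaf across $M$, in the spirit of Bonatti--D\'iaz--Ures on minimality of strong foliations, transported to the conservative $C^1$-generic category. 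The conservative setting offers genuine leverage: recurrence is abundant, $W^s$ and $W^u$ play symmetric roles, and one needs only \emph{one} of the two to be minimal.

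The main obstacle is exactly this last step. Uniform hyperbolicity of an extremal bundle and generic transitivity are by now standard, but converting a dense orbit into a dense \emph{leaf} demands global control of the foliation's geometry that current $C^1$-perturbation technology does not yet supply in full generality; ruling out, generically, a proper closed $u$-saturated (or $s$-saturated) invariant set is precisely the difficulty, and it is the content that keeps the statement a conjecture. I expect the resolution to hinge on a refined dichotomy for minimal saturated sets in dimension $3$---showing that the failure of minimality would force a robust obstruction incompatible with generic conservative dynamics---rather than on any new reduction in the earlier steps.
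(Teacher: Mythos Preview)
The paper does not prove this statement: it is stated as Conjecture~\ref{conjecture.dim3} and left open, so there is no ``paper's own proof'' to compare against. Your proposal is honest about this---you yourself flag that the final step ``is the content that keeps the statement a conjecture''---so what you have written is a strategy sketch, not a proof, and should be read as such.

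Your reduction is aligned with the paper's own framing. The paper explicitly notes (in the introduction) that in dimension $3$ a conservative dominated splitting forces at least one extremal bundle to be uniformly hyperbolic, which is exactly the step you use to produce the candidate strong foliation; the dichotomy you invoke from \thmref{teo.jana.acw} is likewise the intended starting point. So the first two moves---generic dominated splitting, then uniform hyperbolicity of an extremal one-dimensional bundle and its unique integrability---are standard and consistent with the paper.

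The genuine gap is exactly where you locate it: passing from generic transitivity to generic minimality of $W^{u}$ (or $W^{s}$). The Bonatti--D\'iaz--Ures technology you allude to establishes robust minimality of strong foliations only under partial hyperbolicity with a one-dimensional center (and further hypotheses), whereas here the complementary $2$-dimensional bundle need not split further and need have no uniform behaviour. In particular, the ``blender-type mechanisms adapted to the one-dimensional strong bundle'' that you invoke do not currently exist in the generality required: blenders need a second hyperbolic direction to operate, and a bare $F\oplus_{<}E^{uu}$ splitting with $\dim E^{uu}=1$ does not supply one. Your Zorn-type minimal $u$-saturated set always exists, but ruling out that it is proper is precisely the open problem, and nothing in the connecting-lemma toolbox presently closes it. So your proposal correctly isolates the difficulty but does not resolve it---which is the same state of affairs the paper records by labeling the statement a conjecture.
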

Observe that in Theorem \ref{teo.stable.bernoulli}, even though we cannot guarantee that $f\in\cR$ is Bernoulli, it will always be ergodic, since ergodicity is a $G_{\delta}$-property. \\

\subsection{Related results} The proof of Theorem \ref{teo.stable.bernoulli} relies on the SB-criterion (Theorem \ref{teo.no.generico}). This criterion provides conditions to obtain stable Bernoulliness. It is not a genericity-type theorem. All diffeomorphisms satisfying the hypothesis are stably Bernoulli. \par
D. Obata \cite[Theorem B]{obata2019} recently introduced a criterion for stable Bernoulliness in the line of Theorem \ref{teo.no.generico}. Even though both theorems present some coincidences, in the sense that they do not require partial hyperbolicity, they have complementary hypotheses. We would like to explain their differences and similarities. Similarities: both theorems require a $Df$-expanding bundle $E^{uu}$ dominating another bundle $F$, that is $TM=F\oplus_{<}E^{uu}$. Also, both require a certain positive measure set of points for which all Lyapunov exponents corresponding to vectors in $F$ are negative. Differences: D. Obata's theorem requires chain hyperbolicity. This is a type of topological hyperbolicity that contracts topologically in the direction of $F$. Not only that, it requires a sort of integrability: there has to exist a plaque family $W^{F}_{x}$ tangent to $F$, so that $f(\overline{W^{F}_{x}})\subset W^{F}_{f(x)}$ for all $x\in M$. It also requires the existence of a hyperbolic periodic point $p$ such that its homoclinic class is all $M$. We do not require integrability but, on the other hand, we ask the positive measure set of negative Lyapunov exponents to exist on a $C^{1}$-neighborhood of $f$ for all $C^{2}$ conservative $g$. And we ask the foliation tangent to $E^{uu}$ to be minimal for the diffeomorphism $f$.\par
We also point out that there is another theorem in \cite{obata2019}, which does not require the existence of an expanding bundle. It requires a dominated bundle $TM=F\oplus_{<}E$ where 
$$\int_{M}\log\|Df|_{F}\|dm<0\qquad\text{and}\qquad\int_{M}\log\|Df^{-1}|_{E}\|dm<0$$
but he still requires chain hyperbolicity (for both bundles). That is, the existence of an invariant family of plaques not only for $F$ but also for $E$ (for $f^{-1}$), and the presence of a hyperbolic periodic point whose homoclinic class is all $M$. It is also required the existence of two hyperbolic periodic points $q_{s}$ and $q_{u}$ homoclinically related to $p$ (see definitions in \S\ref{section.definitions}) such that $W^{s}(q_{s})$ contains the plaque $W^{F}_{q_{s}}$ and $W^{u}(q_{u})$ contains the plaque $W^{E}_{q_{u}}$. Notice that this hypothesis is also required in \cite[Theorem B]{obata2019} mentioned above.\par
So, as we said, they can be regarded as complementary results. 
\section{Basic concepts}\label{section.definitions}
Let $f\in\diff^{1}_{m}(M)$ be a volume preserving diffeomorphism. Given a $Df$-invariant splitting $TM=E\oplus F$, we say that the splitting is {\em dominated}, and denote it by $TM=E\oplus_{<} F$ if there exists a Riemannian metric such that for all unit vectors $v_{E}\in E_{x}$ and $v_{F}\in F_{x}$ we have
$$\|Df(x)v_{E}\| \leq \frac{1}{2}\|Df(x)v_{F}\|$$
We say that the $Df$-invariant splitting $TM=E_{1}\oplus \dots\oplus E_{k}$ is dominated if $E_{1}\oplus\dots E_{j}\oplus_{<}E_{j+1}\oplus\dots\oplus E_{k}$ for all $j=1,\dots,k-1$.\par
We say that $f\in\diff^{1}_{m}(M)$ is {\em partially hyperbolic} if $TM$ admits a dominated splitting of the form 
$$TM=E^{s}\oplus E^{c}\oplus E^{u}$$
where $Df$ is contracting on $E^{s}$ and $Df^{-1}$ is contracting on $E^{u}$. That is, if for all unit vectors
$v^{\sigma}\in E^{\sigma}_{x}$, where $\sigma=s,c,u$, we have
$$\|Df(x)v^{s}\|<1<\|Df(x)v^{u}\|\quad\text{and}\quad\|Df(x)v^{s}\|<\|Df(x)v^{c}\|<\|Df(x)v^{u}\|.$$

Let us recall that $\lambda(x,v)$ is the {\em Lyapunov exponent}  associated to $v\in T_{x}M$ if
$$\lambda(x,v)=\overline{\lim_{n\to\infty}}\frac{1}{n}\log\|Df^{n}(x)v\|$$
For $m$-almost every $x\in M$, there are finitely many Lyapunov exponents $\lambda_{1}(x),\dots,\lambda_{k}(x)$ in $T_{x}M$, and there is a measurable $Df$-invariant splitting, called the {\em Oseledets splitting} 
$$T_{x}M=E^{1}_{x}\oplus\dots\oplus E^{k}_{x}$$
such that $\lambda(x,v_{i})=\lambda_{i}(x)$ for all $v_{i}\in T_{x}M\setminus\{0\}$. See for instance, \cite{Pesin1977}. 
We will denote by $\nuh(f)$ the set of $x$ such that all $\lambda(x,v)$ are different from zero. 
The measure $m$ is called {\em hyperbolic} if $\nuh(f)$ has full $m$-measure.\par
For simplicity, we will denote
\begin{equation}\label{zipped.oseledets.splitting}
T_{x}M=E^{-}_{x}\oplus E^{0}_{x}\oplus E^{+}_{x} 
\end{equation}

the splitting such that $\lambda(x,v^{+})>0$ for all $v^{+}\in E^{+}_{x}$, $\lambda(x,v^{0})=0$ for all $v^{0}\in E^{0}_{x}$ and $\lambda(x,v^{-})<0$ for all $v^{-}\in E^{-}_{x}$. We call this splitting the {\em zipped Oseledets splitting}.
For $x\in M$, let us define
\begin{equation}\label{eq.invariant.manifolds}
 W^{\pm}(x)=\{y\in M: \limsup_{n\to\infty}\frac{1}{n}\log d(f^{\mp n}(x),f^{\mp n}(y))<0\}
\end{equation}
If $f\in\diff^{2}_{m}(M)$ then for $m$-almost every point $x$, $W^{+}(x)$ and $W^{-}(x)$ are smooth immersed manifolds \cite{Pesin1977}. For $f\in\diff^{1}_{m}(M)$ this is not necessarily true \cite{Pugh1984}.
However, if the zipped Oseledets splitting is dominated, then both $W^{+}(x)$ and $W^{-}(x)$ are immersed manifolds for $m$-almost every point, see \cite{ABC2011}.\par

Following \cite{HHTU11}, given a hyperbolic periodic point $p\in M$ we define the {\em stable Pesin homoclinic class} of $p$ by 
\begin{equation}\label{eq.phc.-}
\phc^{-}(p)=\{x: W^{-}(x)\transv W^{u}(o(p))\ne\emptyset\} 
\end{equation}
where $W^{u}(o(p))$ is the union of the unstable manifolds of $f^{k}(p)$, for all $k=0,\dots,{\rm per}(p)-1$. $\phc^{-}(p)$ is invariant and saturated by $W^{-}$-leaves. We will also denote by $o(p)$ the orbit of $p$. Analogously, we define 
\begin{equation}\label{eq.phc.+}
\phc^{+}(p)=\{x: W^{+}(x)\transv W^{s}(o(p))\ne\emptyset\} 
\end{equation}

If there exists an expanding foliation $W^{u}$, we will denote 
\begin{equation}\label{eq.phc.u}
 \phc^{u}(p)=\{x\in M: W^{u}(x)\transv W^{s}(o(p))\ne\emptyset\}
\end{equation}
Analogously we define $\phc^{s}(p)$ if a contracting foliation $W^{s}$ is given. The foliation will be clear from the context, if it is not, we will denote these sets by $\phc^{W}(p)$, where $W$ is given. 

The importance of Pesin homoclinic classes comes from the next criterion of ergodicity:

\begin{theorem}[Theorem A, \cite{HHTU11}]\label{criterion_hhtu} Let $f:M \rightarrow M$ be a $C^2$-diffeomorphism over a closed connected Riemannian manifold $M$, let $m$ be a smooth invariant measure and $p \in \pper_H(f)$. If $m(\phc^{+}(p))>0$ and $m(\phc^{-}(p))>0$, then
\begin{enumerate}
	\item $\phc^{+}(p) \circeq \phc^{-}(p) \circeq \phc(p)$.
	\item $m|\phc(p)$ is ergodic.
	\item $\phc(p) \subset \nuh(f)$.
\end{enumerate}
\end{theorem}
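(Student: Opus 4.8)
The plan is to prove the three statements together by a Hopf argument carried out in the non-uniformly hyperbolic (Pesin) setting. First I would record two soft facts: $\phc^{+}(p)$ is $f$-invariant and saturated by $W^{+}$-leaves, and $\phc^{-}(p)$ is $f$-invariant and saturated by $W^{-}$-leaves, since $f$ carries $W^{\pm}(x)$ to $W^{\pm}(f(x))$, fixes $W^{s}(o(p))$ and $W^{u}(o(p))$ setwise, and preserves transversality. Then, for a continuous $\varphi\colon M\to\R$, I would introduce the forward and backward Birkhoff averages $\varphi^{+}(x)=\lim_{n\to\infty}\frac1n\sum_{j=0}^{n-1}\varphi(f^{j}x)$ and $\varphi^{-}(x)=\lim_{n\to\infty}\frac1n\sum_{j=0}^{n-1}\varphi(f^{-j}x)$, which exist and agree $m$-a.e.; moreover $\varphi^{+}$ is constant along each $W^{-}$-leaf and $\varphi^{-}$ is constant along each $W^{+}$-leaf, because two leafwise-related points are forward- (resp.\ backward-) asymptotic. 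A key observation is that on $W^{s}(o(p))$ every point has forward orbit converging to $o(p)$, so $\varphi^{+}\equiv\bar\varphi(p):=\frac1{\per(p)}\sum_{k=0}^{\per(p)-1}\varphi(f^{k}p)$ there, and symmetrically $\varphi^{-}\equiv\bar\varphi(p)$ on $W^{u}(o(p))$.

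Next I would dispose of item (3) assuming (1). If $x\in\phc^{+}(p)\cap\phc^{-}(p)$, then $W^{+}(x)\transv W^{s}(o(p))\ne\emptyset$ forces $\dim W^{+}(x)\ge\dim M-\dim W^{s}(o(p))=\dim W^{u}(o(p))$, and $W^{-}(x)\transv W^{u}(o(p))\ne\emptyset$ forces $\dim W^{-}(x)\ge\dim W^{s}(o(p))$; since $p$ is hyperbolic, $\dim W^{s}(o(p))+\dim W^{u}(o(p))=\dim M$, and adding gives $\dim W^{+}(x)+\dim W^{-}(x)\ge\dim M$. As $\dim W^{+}(x)+\dim W^{-}(x)=\dim M-\#\{\text{zero exponents at }x\}\le\dim M$, equality holds and $x$ has no zero Lyapunov exponents, i.e.\ $x\in\nuh(f)$. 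Hence, once (1) identifies $\phc(p)$ with $\phc^{+}(p)\cap\phc^{-}(p)$, statement (3) follows.

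The hard part, and the engine of the whole argument, is to upgrade the leafwise constancy of $\varphi^{\pm}$ to $m$-a.e.\ constancy on $\phc^{\pm}(p)$, which requires the \emph{absolute continuity of the Pesin stable and unstable laminations} (Pesin's theorem, available since $f$ is $C^{2}$). I would work in Pesin blocks $\Lambda_{\ell}$, which carry a local product structure by absolutely continuous $W^{+}$- and $W^{-}$-plaques. Absolute continuity transfers an $m$-a.e.\ identity to an a.e.\ identity along a.e.\ plaque, so the invariant function $\psi:=\varphi^{+}=\varphi^{-}$ is, for a.e.\ leaf, a.e.-constant along $W^{+}$ and along $W^{-}$; by the elementary fact that a function a.e.-constant along both factors of a product box is a.e.-constant on the box, $\psi$ is a.e.-constant on Pesin product boxes. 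Since $p\in W^{s}(o(p))\cap\nuh(f)$, a box around a point of $o(p)$ meets $W^{s}(o(p))$, on which $\varphi^{+}\equiv\bar\varphi(p)$; hence $\psi\equiv\bar\varphi(p)$ a.e.\ on that box. Propagating this value along $W^{+}$-leaves (on which $\psi=\varphi^{-}$ is a.e.-constant), along $f$-orbits (using $f$-invariance of $\psi$ and the fact that $f^{n}(W^{s}(o(p)))$ accumulates on $o(p)$), and symmetrically along $W^{-}$-leaves and $W^{u}(o(p))$, a careful overlapping-box argument then yields $\psi\equiv\bar\varphi(p)$ $m$-a.e.\ on $\phc^{+}(p)$ and on $\phc^{-}(p)$. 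Feeding in the $W^{+}$-saturation of $\phc^{+}(p)$, the $W^{-}$-saturation of $\phc^{-}(p)$, and one final use of absolute continuity, I would conclude $\phc^{+}(p)\circeq\phc^{-}(p)=:\phc(p)$, which is (1); running the argument over a countable dense family of continuous $\varphi$ shows every $f$-invariant $L^{\infty}$ function is $m$-a.e.\ constant on $\phc(p)$, i.e.\ $m|\phc(p)$ is ergodic, which is (2); and (3) has been reduced to (1). I expect this last, ``global'' step to be the main obstacle: in contrast with the uniformly hyperbolic or accessible case there is no uniform local product structure, the Pesin blocks have non-uniform size, and the null sets through which the value $\bar\varphi(p)$ must be propagated have to be controlled delicately — this is precisely where the hypotheses $m(\phc^{+}(p))>0$ and $m(\phc^{-}(p))>0$ enter essentially.
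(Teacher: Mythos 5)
First, a remark on the comparison itself: the paper does not prove this statement — it is quoted verbatim as Theorem A of \cite{HHTU11} — so your proposal has to be measured against the argument in that reference rather than against anything internal to this paper. Your skeleton is the right one: a Hopf argument in the Pesin setting, with $W^{s}(o(p))$ and $W^{u}(o(p))$ pinning the Birkhoff averages to the value $\bar\varphi(p)$; and your derivation of item (3) from item (1) by the transversality dimension count ($\dim W^{+}(x)+\dim W^{-}(x)\ge\dim M$ forces the absence of zero exponents) is correct and is essentially how nonuniform hyperbolicity of the class is obtained in \cite{HHTU11}.

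The gap is in the engine of the argument. Pesin blocks do \emph{not} carry a local product structure: the local stable and unstable plaques through points of a block $\Lambda_{\ell}$ cover only a positive-measure, typically Cantor-like set; the union of $W^{+}$-plaques through $\Lambda_{\ell}$ is not $W^{-}$-saturated; and the bracket $W^{+}_{\mathrm{loc}}(x)\cap W^{-}_{\mathrm{loc}}(y)$ need not exist, let alone be an Oseledets-regular point of the block. Hence the ``elementary fact that a function a.e.-constant along both factors of a product box is a.e.-constant on the box'' has nothing to apply to, and the subsequent overlapping-box propagation is precisely the step that is known to fail for general nonuniformly hyperbolic systems — this failure is why the statement is a nontrivial criterion rather than a routine Hopf argument. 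A second instance of the same problem: even granting a.e.-constancy of $\psi$ on a box meeting $W^{s}(o(p))$, identifying the constant with $\bar\varphi(p)$ requires the $m$-a.e.\ identity $\varphi^{+}=\varphi^{-}$ to be visible on the measure-zero submanifold $W^{s}(o(p))$, which your sketch does not justify. The proof in \cite{HHTU11} avoids chains of boxes altogether: by the very definition of $\phc^{\pm}(p)$, every relevant stable (resp.\ unstable) Pesin leaf crosses the \emph{single} manifold $W^{u}(o(p))$ (resp.\ $W^{s}(o(p))$) transversally, so one works in one Pesin block around a density point of $\phc^{-}(p)$ (resp.\ $\phc^{+}(p)$), uses the uniform size and continuity of the leaves there together with the inclination lemma, and applies transverse absolute continuity of the Pesin lamination with an iterate of $W^{u}(o(p))$ itself as the transversal; this transfers the a.e.\ identity $\varphi^{+}=\varphi^{-}$ to a positive-leaf-volume subset of $W^{u}(o(p))$, where $\varphi^{-}\equiv\bar\varphi(p)$ holds everywhere. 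That single-junction mechanism through the invariant manifolds of $o(p)$ — not a product-box chain — is the missing idea, and it is also where the hypotheses $m(\phc^{\pm}(p))>0$ genuinely enter (to produce the density points in a Pesin block from which the transfer starts).
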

The notation $A\zeroeq B$ means $m(A\triangle B)=0$.

As a consequence of the Pesin's ergodic component theorem \cite{Pesin1977}, Katok's closing lemma \cite{Katok1980} and the ergodicity criterion statement above, one obtains the following ``spectral decomposition'':

\begin{theorem} Let $M$ be a closed connected Riemannian manifold, let $f:M \rightarrow M$ be a $C^2$-diffeomorphism and let $m$ be a smooth measure hyperbolic over an $f$-invariant set $V$. Then:
\begin{enumerate}
	\item[(a)] 
	$$V \circeq \bigcup_{n \in \mathbb{N}} \Gamma_n$$
	where $\Gamma_n$ are disjoint measurable invariant sets such that $f|_{\Gamma_n}$ is ergodic.
	\item[(b)] For each $\Gamma_n$, there exists $k_n \in \mathbb{N}$ and measurable sets with positive measure $\Gamma_1^n, \, \Gamma_2^n, \ldots , \, \Gamma_{k_n}^n$ which are pairwise disjoints such that $f(\Gamma_j^n)=\Gamma_{j+1}^n$ for every $j=1, 2, \ldots, k_n-1$,  $f(\Gamma_{k_n}^n)=\Gamma_{1}^n$ and $f^{k_n}$ is Bernoulli.
	\item[(c)] There exists a hyperbolic periodic point $p_n$ such that $\Gamma_n= \phc (p_n)$
\end{enumerate}
\end{theorem}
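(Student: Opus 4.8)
The plan is to derive (a) and (b) directly from Pesin's ergodic theory for hyperbolic measures, and to establish (c) by attaching to each ergodic component a hyperbolic periodic point via Katok's closing lemma and then invoking \thmref{criterion_hhtu}.

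For (a) and (b) I would apply Pesin's ergodic component theorem \cite{Pesin1977} to $f|_V$: since $m$ is hyperbolic on the invariant set $V$, modulo a null set $V$ splits into an at most countable disjoint family of $f$-invariant measurable sets $\Gamma_n$ of positive measure on which $f$ is ergodic (countability being automatic, as the $\Gamma_n$ are pairwise disjoint of positive measure in a finite measure space). For (b) I would again use Pesin theory on each hyperbolic ergodic component: $\Gamma_n$ admits a finite cyclic decomposition $\Gamma_n\circeq\Gamma^n_1\sqcup\cdots\sqcup\Gamma^n_{k_n}$ with $f(\Gamma^n_j)=\Gamma^n_{j+1}$ (indices mod $k_n$) and $f^{k_n}$ Bernoulli on each piece — the piece being mixing, hence having the Kolmogorov property, and a smooth system with hyperbolic measure and the Kolmogorov property is Bernoulli.

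The substantial part is (c). Fix $\Gamma_n$, a positive-measure Pesin block $\Lambda\subset\Gamma_n$ on which the hyperbolicity constants and the sizes of the local manifolds $W^\pm_{\mathrm{loc}}$ are uniform, and a density point $x_0\in\Lambda$ that is Lyapunov--Perron regular. By Katok's closing lemma \cite{Katok1980} there is a hyperbolic periodic point $p=p_n$ arbitrarily close to $x_0$ whose orbit shadows a long segment of the orbit of $x_0$ and whose local stable and unstable manifolds have size bounded below and lie $C^1$-close to $W^-_{\mathrm{loc}}(x_0)$ and $W^+_{\mathrm{loc}}(x_0)$. Since $W^+_{\mathrm{loc}}(x_0)\transv W^-_{\mathrm{loc}}(x_0)=\{x_0\}$ and transversality is an open condition, for every $y\in\Lambda$ close enough to $x_0$ the manifold $W^+(y)$ meets $W^s(o(p))$ transversally; by absolute continuity of the Pesin lamination by unstable manifolds the set of such $y$ has positive measure, and because $x_0$ is a density point of $\Gamma_n$ one can arrange a positive-measure portion of it to lie in $\Gamma_n$. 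Thus $m(\phc^+(p)\cap\Gamma_n)>0$, and symmetrically $m(\phc^-(p)\cap\Gamma_n)>0$, so in particular $m(\phc^+(p))>0$ and $m(\phc^-(p))>0$. Then \thmref{criterion_hhtu} gives $\phc^+(p)\circeq\phc^-(p)\circeq\phc(p)$ with $m|_{\phc(p)}$ ergodic and $\phc(p)\subset\nuh(f)$; since $\phc(p)$ and $\Gamma_n$ are ergodic components overlapping in positive measure, they coincide mod $0$, which gives (c).

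The main obstacle will be the geometric bookkeeping in (c): guaranteeing that Katok's periodic point $p_n$ has local invariant manifolds that effectively shadow those of the regular density point $x_0$ with quantitative control, and then using absolute continuity of the Pesin laminations to ensure that the transversal intersections produced by perturbing around $x_0$ sweep out a positive-measure subset \emph{of} $\Gamma_n$ and not merely of a neighbourhood of it. Once $\phc(p_n)$ and $\Gamma_n$ are known to intersect in positive measure, identifying them is immediate from the fact that distinct ergodic components are disjoint mod $0$.
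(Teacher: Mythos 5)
Your proposal follows exactly the route the paper takes: the paper states this theorem without a written proof, attributing it precisely to Pesin's ergodic component theorem, Katok's closing lemma, and \thmref{criterion_hhtu}, and these are the three ingredients you deploy for (a)--(b), for producing $p_n$, and for the identification $\Gamma_n\zeroeq\phc(p_n)$, respectively. The only loose phrase is the aside ``mixing, hence having the Kolmogorov property'' (not a valid implication in general), but since the cyclic decomposition with Bernoulli return map is itself part of Pesin's theorem for smooth hyperbolic measures, nothing essential is affected.
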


\begin{corollary}\label{corollary.pesin} In the hypothesis of the theorem above, if $m$ is hyperbolic and $f^{n}$ is ergodic for every $n$, then $f$ is Bernoulli. 
\end{corollary}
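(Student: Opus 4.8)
The plan is to collapse the spectral decomposition twice: first using that $f$ itself is ergodic to reduce to a single component $\Gamma_n$, and then using that the power $f^{k_n}$ is ergodic to kill the cyclic decomposition inside that component.

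\emph{Step 1: reduction to one ergodic component.} Since $m$ is hyperbolic we may apply the preceding theorem with $V\circeq M$. By part~(a) we get $M\circeq\bigcup_{n\in\N}\Gamma_n$ with the $\Gamma_n$ pairwise disjoint, $f$-invariant, and $f|_{\Gamma_n}$ ergodic; discarding the null ones we may assume $m(\Gamma_n)>0$ for all $n$. If the index set had more than one element, then $\Gamma_1$ would be an $f$-invariant set with $0<m(\Gamma_1)<m(M)$, contradicting the ergodicity of $f=f^{1}$. Hence there is a single component $\Gamma:=\Gamma_1$ and $\Gamma\circeq M$.

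\emph{Step 2: killing the cycle.} Apply part~(b) to $\Gamma$: there is $k:=k_1\in\N$ and pairwise disjoint sets $\Gamma^1_1,\dots,\Gamma^1_k\subset\Gamma$ of positive measure with $f(\Gamma^1_j)=\Gamma^1_{j+1}$ (indices mod $k$), and with $f^{k}$ Bernoulli on $\Gamma^1_1$. The union $\bigcup_{j}\Gamma^1_j$ is $f$-invariant and of positive measure, so ergodicity of $f|_\Gamma$ gives $\bigcup_{j}\Gamma^1_j\circeq\Gamma\circeq M$. Now each $\Gamma^1_j$ satisfies $f^{k}(\Gamma^1_j)=\Gamma^1_j$, i.e.\ it is $f^{k}$-invariant. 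If $k\ge 2$, then $\Gamma^1_1$ is an $f^{k}$-invariant set with $0<m(\Gamma^1_1)<m(M)$ — the strict inequality because the disjoint set $\Gamma^1_2$ also has positive measure — which contradicts the hypothesis that $f^{k}$ is ergodic. Therefore $k=1$, hence $\Gamma^1_1\circeq M$ and $f=f^{k}|_{\Gamma^1_1}$ is Bernoulli, as claimed.

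The argument is essentially bookkeeping with the mod-zero relation $\circeq$; the only points that need genuine care are the reduction to $V\circeq M$ coming from hyperbolicity of $m$, the observation that the cyclic pieces in~(b) are literally $f^{k}$-invariant (this is exactly what allows ergodicity of the power $f^{k}$ to force $k=1$), and the upgrade from ``Bernoulli on $\Gamma^1_1$'' to ``$f$ Bernoulli'' once $\Gamma^1_1\circeq M$. I do not expect any real obstacle beyond keeping these identifications straight.
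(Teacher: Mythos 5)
Your argument is correct, and it is the standard (evidently intended) one: the paper states this corollary without proof, and your two-step collapse --- ergodicity of $f$ forcing a single ergodic component of full measure, then ergodicity of $f^{k_1}$ forcing the cyclic period $k_1=1$ via the $f^{k_1}$-invariance of the pieces $\Gamma^1_j$ --- is exactly how the spectral decomposition yields Bernoulliness here. No gaps.
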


\begin{corollary}\label{corollary.homoclinically.related} If for $f\in\diff^{2}_{m}(M)$ there exists a hyperbolic periodic point such that $m(\phc(p))=1$ and all points in the orbit of $p$ are homoclinically related, that is $W^{s}(f^{k}(p))\transv W^{u}(f^{m}(p))\ne\emptyset$ for all $k,m\in\Z$, then $f$ is Bernoulli. 
\end{corollary}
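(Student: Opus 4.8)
The plan is to deduce from the quoted results that $f$ is ergodic and $m$ is hyperbolic, reduce Bernoullicity to the triviality of the cyclic part of the spectral decomposition, and then use the homoclinic relations inside $o(p)$ to force that triviality.

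First I would note that since $m(\phc(p))=1$ — so in particular $m(\phc^{\pm}(p))>0$, these sets being how $\phc(p)$ is defined in Theorem~\ref{criterion_hhtu} — and $p\in\pper_{H}(f)$, Theorem~\ref{criterion_hhtu} gives that $m=m|_{\phc(p)}$ is ergodic and that $\phc(p)\subseteq\nuh(f)$ has full measure; hence $m$ is hyperbolic and $f$ is ergodic. By Corollary~\ref{corollary.pesin} it then suffices to prove that $f^{n}$ is ergodic for every $n\in\N$. Applying the spectral decomposition theorem to $f$ on the now hyperbolic invariant set $M$, ergodicity of $f$ leaves a single component, and part~(b) produces an integer $k\ge 1$ and pairwise disjoint positive-measure sets $\Gamma_{0},\dots,\Gamma_{k-1}$ with $\bigcup_{j}\Gamma_{j}\circeq M$, $f(\Gamma_{j})=\Gamma_{j+1\bmod k}$, and $f^{k}$ Bernoulli on each $\Gamma_{j}$. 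A routine argument then shows that $f^{n}$ is ergodic for all $n$ exactly when $k=1$ (and, directly, $k=1$ already makes $f=f^{k}$ Bernoulli), so the whole problem reduces to ruling out $k>1$.

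So suppose $k>1$. Each $\Gamma_{j}$ is an ergodic component of the $C^{2}$ diffeomorphism $f^{k}$, hence, after discarding an $m$-null set, is saturated by the Pesin stable and unstable manifolds $W^{-}$ and $W^{+}$ (the same laminations for $f$ and for $f^{k}$); this is the usual consequence of the Hopf argument together with the absolute continuity of $W^{\pm}$. Writing $\ell=\per(p)$, the identity $f^{\ell}(p)=p$ and the cyclic action of $f$ on $\{\Gamma_{j}\}$ give $k\mid\ell$, and after relabelling we may assume $p\in\Gamma_{0}$, so $f^{i}(p)\in\Gamma_{i\bmod k}$ for every $i\in\Z$; in particular $f(p)\in\Gamma_{1}$. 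By hypothesis $W^{s}(p)\transv W^{u}(f(p))\ne\emptyset$, so I would fix a transverse heteroclinic point $z$. The set $\overline{o(z)}=o(z)\cup o(p)$ is compact, invariant and uniformly hyperbolic, so $z\in\nuh(f)$ and $W^{-}(z),W^{+}(z)$ are genuine immersed submanifolds, containing a neighbourhood of $z$ in $W^{s}(p)$ and in $W^{u}(f(p))$ respectively. Because $z$ lies on the stable manifold of $p$, the $W^{-}$-saturation puts $z$ in the cyclic piece of $p$, namely $\Gamma_{0}$; because $z$ lies on the unstable manifold of $f(p)\in\Gamma_{1}$, the $W^{+}$-saturation puts $z$ in $\Gamma_{1}$. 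Since $\Gamma_{0}\cap\Gamma_{1}=\emptyset$, this contradiction forces $k=1$, and $f$ is Bernoulli.

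The hard part will be making that last placement of $z$ rigorous: the $\Gamma_{j}$ are defined only modulo $m$-null sets, whereas $z,p,f(p)$ and their invariant manifolds are themselves $m$-null, so ``$z\in\Gamma_{0}$'' is not literally meaningful. What makes it work is that the $W^{\pm}$-saturation of an ergodic component is not merely an almost-everywhere statement along almost every leaf: it propagates along the invariant manifold of any uniformly hyperbolic orbit, where the Hopf argument runs verbatim, and $z$, $p$, $f(p)$ are such orbits. Equivalently — and this is the version I would write out in full — using that $p\in\pper_{H}(f)$ so that $\dim W^{s}(p)+\dim W^{u}(p)=\dim M$, the transverse intersection $W^{s}(p)\transv W^{u}(f(p))$ spans a neighbourhood $N$ of $z$; by the inclination lemma the stable, resp.\ unstable, manifolds of $m$-a.e.\ point of $N$ cross $W^{u}(f(p))$, resp.\ $W^{s}(p)$, transversally, and combining this with the leaf-saturation of $\Gamma_{0}$ and $\Gamma_{1}$ (again propagated onto the honest invariant manifolds of $p$ and of $f(p)$) yields a positive-measure set simultaneously inside $\Gamma_{0}$ and $\Gamma_{1}$. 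This bookkeeping — transferring measure-theoretic saturation onto the genuine invariant manifolds of hyperbolic periodic and heteroclinic orbits — is where essentially all of the content of the corollary sits; everything else is assembly of the quoted theorems.
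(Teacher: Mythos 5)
Your proposal is correct and follows essentially the same route as the paper: both reduce the statement to the ergodicity of every power $f^{n}$ via Corollary~\ref{corollary.pesin}, and both use the inclination ($\Gamma$-) lemma to show that the homoclinic relations among the points of $o(p)$ force the relevant $f^{k}$-classes to coincide mod~$0$. The only difference is packaging --- you argue by contradiction with the cyclic components $\Gamma_{j}$ of the spectral decomposition, while the paper works directly with $\phc_{f^{k}}(p)\zeroeq\phc_{f}(p)\zeroeq M$ --- and the ``hard part'' you flag (transferring the measure-theoretic saturation onto the actual invariant manifolds of $p$ and $f(p)$) is exactly what the paper compresses into ``a consequence of the $\Gamma$-lemma.''
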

\begin{proof} Let us first note that if $p$ and $q$ are two hyperbolic periodic points that are homoclinically related, that is, $W^{s}(p)\transv W^{u}(q)\ne\emptyset$ and $W^{s}(q)\transv W^{u}(p)\ne\emptyset$, then $\phc(p)\zeroeq\phc(q)$. This is a consequence of the $\Gamma$-lemma. \par
Secondly, note that $\phc(p)$ depends on the iterate of $f$, that is, we actually have to consider $\phc_{f^{k}}(p)$. This is because the orbit of $p$ depends on the iterate, we actually have $o_{f^{k}}(p)\subset o_{f}(p)$. Therefore, in general $\phc_{f^{k}}(p)\subset\phc(p)$. However, by the previous remark, $\phc_{f^{k}}(p)\zeroeq\phc(p)\zeroeq M$ for all $k\ne 0$. By Corollary \label{corollary.pesin} above and Theorem \ref{criterion_hhtu}, $f$ is Bernoulli.
\end{proof}
The condition in the corollary above is generic in $\diff^{1}_{m}(M)$. In fact, the following holds (We thank S. Crovisier and R. Potrie for pointing us out the correct citation of this Theorem.):
\begin{theorem}\cite{AC2012}\label{teo.bonatti.crovisier} Generically in $\diff^{1}_{m}(M)$, all periodic points of the same index are homoclinically related.
\end{theorem}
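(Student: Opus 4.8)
The plan is to combine the volume-preserving $C^1$ connecting lemma (the conservative version of Hayashi's lemma, due to Arnaud and to Bonatti--Crovisier), Franks' lemma, the generic transitivity of conservative diffeomorphisms, and a Baire category argument. First I would fix a countable basis $\{U_n\}_{n\in\N}$ of open subsets of $M$ and, for each triple $(n,k,i)$ with $0\le i\le\dim M$, consider the set $\mathcal{O}_{n,k,i}\subset\diff^1_m(M)$ of $f$ such that \emph{either} one of $U_n,U_k$ contains no hyperbolic periodic point of $f$ of index $i$ (i.e.\ with $\dim W^s$ equal to $i$), \emph{or} there exist hyperbolic periodic points $p\in U_n$, $q\in U_k$ of index $i$ with $W^s(p)\transv W^u(q)\ne\emptyset$ and $W^s(q)\transv W^u(p)\ne\emptyset$. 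Transverse intersections of invariant manifolds, hyperbolicity, and the index of a periodic point are all $C^1$-robust, and hyperbolic periodic points admit continuations; hence whenever $f$ realizes the second alternative it does so on a whole $C^1$-neighborhood, so it is enough to prove that each $\mathcal{O}_{n,k,i}$ is dense.

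For density, start from $f$ and perform an arbitrarily $C^1$-small preliminary perturbation (Franks' lemma together with a conservative Kupka--Smale condition) so that the relevant periodic points are hyperbolic with invariant manifolds in general position. If $U_n$ or $U_k$ carries no index-$i$ hyperbolic periodic point we are in the first alternative; otherwise pick such points $p\in U_n$, $q\in U_k$. Since $C^1$-generically a volume-preserving diffeomorphism of a connected manifold is transitive (Bonatti--Crovisier), after a further small conservative perturbation $p$ and $q$ lie in the same chain recurrence class, so the conservative connecting lemma produces a $C^1$-small volume-preserving perturbation creating a point of $W^u(p)\cap W^s(q)$. Because $\dim W^u(p)+\dim W^s(q)=(\dim M-i)+i=\dim M$, one more arbitrarily small perturbation (again via Franks' lemma along a returning segment of the orbit) makes this intersection transverse, and the symmetric construction yields $W^s(p)\transv W^u(q)\ne\emptyset$ as well. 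Since the resulting intersections are transverse and $p,q$ remain hyperbolic of index $i$, the perturbed map lies in the interior of $\mathcal{O}_{n,k,i}$, which is therefore dense.

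Setting $\mathcal{R}=\bigcap_{n,k,i}\mathcal{O}_{n,k,i}$ and intersecting with the residual sets on which $f$ is Kupka--Smale, transitive, and on which hyperbolic periodic orbits accumulate on every periodic orbit gives a residual subset of $\diff^1_m(M)$; for $f$ in it, any two hyperbolic periodic points $p^{\ast},q^{\ast}$ of index $i$ get related by choosing basis sets around points of their orbits, applying the second alternative of the corresponding $\mathcal{O}_{n,k,i}$, and transporting the relation to $p^{\ast},q^{\ast}$ via the $\lambda$-lemma (using that being homoclinically related is an equivalence relation). I expect the main obstacle to be twofold. The genuinely analytic core is the \emph{conservative} connecting lemma together with Franks' lemma, which must create the homoclinic intersections \emph{and} make them transverse without destroying volume-preservation or altering the index; this is delicate because one perturbs away from a generic map and must re-establish transversality afterwards. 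The secondary, bookkeeping-type difficulty is making the Baire argument honest: ``all periodic points of a given index are homoclinically related'' is not visibly a $G_\delta$ condition since there are infinitely many periodic orbits, and passing from the discretized statement over a countable basis to the statement for \emph{every} pair of periodic points (the final chaining step above) is exactly the point that requires care, and is handled by the robustness of transverse homoclinic intersections.
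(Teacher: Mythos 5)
The paper does not prove this statement: it is imported verbatim from \cite{AC2012} (the authors even thank Crovisier and Potrie for the correct citation), so there is no in-paper argument to compare against. Your overall strategy --- conservative connecting lemma plus Franks' lemma plus generic transitivity plus a Baire scheme --- is indeed the strategy of the literature. But as written your Baire scheme has two gaps, one cosmetic and one genuine.

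The cosmetic one: $\mathcal{O}_{n,k,i}$ is not open, because the first alternative (``$U_n$ or $U_k$ contains no hyperbolic periodic point of index $i$'') is not an open condition --- a $C^1$-small perturbation can create such a point. The standard repair is to replace that alternative by the \emph{interior} of the set of $f$ with no such point, so that $\mathcal{O}_{n,k,i}$ is open by construction and one proves density of the union. The genuine gap is the final chaining step. The residual set you build only guarantees that for each pair of basis elements $U_n,U_k$ carrying index-$i$ periodic points, \emph{some} pair $p\in U_n$, $q\in U_k$ is homoclinically related --- not the specific pair $p^{\ast},q^{\ast}$ you started with. Shrinking $U_n$ around $p^{\ast}$ only produces related periodic points accumulating on $p^{\ast}$; to transport the relation to $p^{\ast}$ itself via the $\lambda$-lemma you would need to know that $p^{\ast}$ is homoclinically related to those nearby points, which is exactly the statement being proved. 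The robustness of transverse intersections does not close this circle. The correct discretization is by \emph{period}, not by location: for a conservative Kupka--Smale diffeomorphism the periodic orbits of period $\le N$ are finitely many, hyperbolic, and admit continuations on a neighborhood, so one can define the open set of $f$ near which \emph{all} index-$i$ periodic points of period $\le N$ are pairwise robustly related, prove its density with the connecting lemma exactly as you describe, and intersect over $N$. With that modification (and the openness repair) your argument becomes the standard proof.
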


The following theorem is essential in this paper:
\begin{theorem}\cite{mane1984,bochi2002,JRH2012,ACW16}\label{teo.jana.acw} For a generic $f\in\diff^{1}_{m}(M)$, either all Lyapunov exponents are zero $m$-almost everywhere, or else:
\begin{enumerate}
 \item $\nuh(f)\zeroeq M$
 \item $f$ is ergodic
 \item the Oseledets splitting is dominated. Call the zipped Oseledets splitting $TM=E^{-}\oplus E^{+}$
\item there exists a hyperbolic periodic point $q$ with $u(q)=\dim E^{+}$ such that $\phc(q)\zeroeq M$
\end{enumerate} 
\end{theorem}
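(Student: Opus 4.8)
The plan is to assemble Theorem~\ref{teo.jana.acw} from the four cited works, which treat progressively larger dimensions (\cite{mane1984,bochi2002} for surfaces, \cite{JRH2012} for $\dim M=3$, \cite{ACW16} in general), and to check that, after intersecting the relevant residual sets, the dichotomy holds with all four conclusions simultaneously. First I would record the generic continuity of Lyapunov exponents: for each $k$ the function $f\mapsto \int_M\log\|\Lambda^k Df\|\,dm$ is, by sub-additivity, an infimum of $C^1$-continuous functions, hence upper semicontinuous, so its continuity points form a residual set; on their intersection one has the Bochi--Viana dichotomy, that for $m$-a.e.\ $x$ either the Oseledets splitting at $x$ is dominated along the orbit of $x$ or all Lyapunov exponents at $x$ coincide. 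Since $m$ is a smooth invariant volume, this last alternative forces those exponents to be zero, and one more generic perturbation (equalizing exponents to zero when the splitting fails to be dominated) yields a residual set $\cR_1$ on which \emph{either} all Lyapunov exponents vanish $m$-a.e., \emph{or} there is a positive-measure set carrying a non-trivial dominated Oseledets splitting --- which is precisely the positive-metric-entropy hypothesis of \cite{ACW16}.

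In the second alternative I would then invoke \cite{ACW16} (and \cite{JRH2012} when $\dim M=3$): there is a residual set $\cR_2$ such that every such $f$ satisfies $\nuh(f)\zeroeq M$, is ergodic, and has a dominated Oseledets splitting. As there are then no zero exponents, the zipped Oseledets splitting has no central bundle; the dimensions of the Oseledets bundles are $m$-a.e.\ constant by ergodicity, and $\nuh(f)$ is dense because it has full measure for a fully supported measure, so the dominated splitting defined on $\nuh(f)$ extends continuously to a dominated splitting $TM=E^-\oplus E^+$ over all of $M$. This gives conclusions (1), (2) and (3).

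For conclusion (4) I would produce the periodic point by a purely $C^1$ argument, replacing Pesin theory by its $C^1$-generic counterpart from \cite{ABC2011} --- the same reference that guarantees that $W^+(x)$ and $W^-(x)$ are immersed manifolds for $m$-a.e.\ $x$ when the zipped splitting is dominated. By Mañé's ergodic closing lemma together with a Baire-category argument, a $C^1$-generic $f$ as above has, for every $\eps>0$, a hyperbolic periodic orbit whose Lyapunov spectrum is $\eps$-close to that of $m$; since the spectrum of $m$ consists of exactly $\dim E^+$ positive and $\dim E^-$ negative exponents (and no zero ones, by (1)), such an orbit $o(q)$ is hyperbolic with $u(q)=\dim E^+$, and along $o(q)$ the global splitting $E^-\oplus E^+$ agrees with the orbit's stable/unstable splitting by uniqueness of dominated splittings of a given index. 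Now fix a Pesin block of positive measure on which the manifolds $W^-(x)$ have uniform size and vary continuously; since $m$ has full support and the Pesin blocks exhaust a full-measure set, some such block meets every neighbourhood of $q$ in positive measure, and for $x$ close enough to $q$ the disk $W^-(x)$, nearly tangent to $E^-(q)$, crosses $W^u_{\mathrm{loc}}(q)$ transversally (complementary dimensions). Hence $\phc^-(q)$ contains a positive-measure set, and the same argument applied to $W^+(x)$ and $W^s(o(q))$ gives $m(\phc^+(q))>0$. Both $\phc^{\pm}(q)$ are $f$-invariant, so by the ergodicity in (2) each of them equals $M$ modulo a null set; in particular they coincide up to null sets, so $\phc(q)\zeroeq M$. (Alternatively, once $\phc^{\pm}(q)$ are seen to have positive measure one may quote the ergodicity criterion, Theorem~\ref{criterion_hhtu}, in its $C^1$ form.)

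The crux --- and the reason the general statement needed \cite{JRH2012} and then \cite{ACW16}, rather than just the surface arguments of \cite{mane1984,bochi2002} --- is the second step: upgrading ``the Oseledets splitting is dominated on a positive-measure set'' to ``$m$ is nonuniformly hyperbolic on a full-measure set, the splitting is globally dominated, and $f$ is ergodic''. Ergodicity is the hardest point, since in the $C^1$ category one has no absolute continuity of the invariant manifolds and the classical Hopf argument is unavailable; one must instead combine Bochi--Viana-type perturbations that spread Lyapunov exponents with connecting-lemma technology to rule out, $C^1$-generically, any configuration with a non-ergodic component or a surviving zero-exponent part. Once that is in hand, the rest is bookkeeping with residual sets together with standard consequences of a dominated splitting.
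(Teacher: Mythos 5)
The paper does not prove Theorem~\ref{teo.jana.acw} at all: it is imported as a black box from the cited works (Ma\~n\'e's announcement, Bochi's proof in dimension 2, Rodriguez Hertz's proof in dimension 3, and Avila--Crovisier--Wilkinson in general), and the only text accompanying the statement is a historical paragraph. So there is no in-paper argument to compare yours against; what can be judged is whether your sketch is a faithful roadmap of how the references establish the result, and on that score it is broadly accurate. You correctly identify the crux --- upgrading ``dominated Oseledets splitting on a positive-measure set'' to ``globally dominated, nonuniformly hyperbolic, ergodic'' without absolute continuity of invariant manifolds --- and you correctly place it in \cite{JRH2012,ACW16} rather than in the surface arguments.

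Two soft spots are worth flagging. First, your passage from ``not all exponents vanish a.e.'' to ``positive metric entropy'' (the hypothesis of \cite{ACW16}) is not automatic in the $C^{1}$ category: Ruelle's inequality gives only $h_{m}(f)\le\int\sum\lambda_{i}^{+}dm$, and the reverse inequality (Pesin's formula) is itself one of the generic results of \cite{ACW16}; the paper acknowledges this when it says the equivalence holds ``after Theorem~\ref{teo.jana.acw}''. Second, for conclusion (4) your route via the ergodic closing lemma plus uniform Pesin blocks for $W^{\pm}$ is plausible but leans heavily on the $C^{1}$-generic stable-manifold theory of \cite{ABC2011}, whose applicability on blocks of uniform size is exactly the delicate point; the paper instead sources the robust version of (4) from Lemma 5.1 of \cite{AB2012} combined with the generic homoclinic relation of periodic points of the same index (Theorem~\ref{teo.bonatti.crovisier}), which is the cleaner and the actually-cited path. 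As written, your text is a correct outline rather than a proof: all of the hard analytic content remains deferred to the citations, which is also how the paper treats it.
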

It was R. Ma\~n\'e the first to notice that the Oseledets splitting had a regular behavior under generic conditions. He conjectured that generically the Oseledets splitting was dominated and announced that for the generic conservative diffeomorphism in dimension 2, either all Lyapunov exponents vanish or else the diffeomorphism is Anosov. He provided a sketch of the proof. The proof was harder than what was initially thought, and was finished by J. Bochi in 2002. Ten years later J. Rodriguez Hertz proved that a weaker result, namely, what is stated in Theorem \ref{teo.jana.acw} is true for 3-dimensional manifolds. Very soon after, A. Avila and J.Bochi conjectured that J. Rodriguez Hertz' theorem actually holds in any dimensions \cite{AB2012}. This conjecture was finally proven true by A. Avila, S. Crovisier and A. Wilkinson in 2016.\par
We will also use the following 
\begin{lemma} \cite{AB2012} \label{avila.bochi} For a generic $f\in\diff^{1}_{m}(M)$ if $q$ is the hyperbolic periodic point of Theorem \ref{teo.jana.acw}, for every $\eps>0$ there exists a $C^{1}$-neighborhood $\U(f)$ of $f$ such that for all $C^{2}$ diffeomorphisms $g$ in $\U(f)$:
 $$m(\phc_{g}(q_{g}))>1-\eps$$
 where the hyperbolic periodic point $q_{g}$ is the analytic continuation of $q$. 
\end{lemma}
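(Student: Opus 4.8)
The plan is to deduce the lemma from a lower–semicontinuity statement for the function $g\mapsto m(\phc_{g}(q_{g}))$ at a generic $f$, together with the observation that Theorem~\ref{teo.jana.acw} forces this value to be $1$ at $f$. Indeed, by Theorem~\ref{teo.jana.acw} (in the non-trivial case, the only one in which a point $q$ is produced) the measure $m$ is hyperbolic, the zipped Oseledets splitting $TM=E^{-}\oplus_{<}E^{+}$ is dominated, and $\phc_{f}(q)\zeroeq M$; since $\phc_{f}(q)\zeroeq\phc^{+}_{f}(q)\cap\phc^{-}_{f}(q)$ by Theorem~\ref{criterion_hhtu}, both $\phc^{+}_{f}(q)$ and $\phc^{-}_{f}(q)$ have full $m$-measure. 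Domination is a $C^{1}$-open condition with $C^{0}$-continuous dependence of the invariant bundles, so there is a $C^{1}$-neighborhood $\U_{0}$ of $f$ on which every $g$ carries a dominated splitting $TM=E^{-}_{g}\oplus_{<}E^{+}_{g}$ with $E^{\pm}_{g}\to E^{\pm}_{f}$; and for $C^{2}$ maps $g$ the Pesin manifolds $W^{\pm}_{g}$ exist $m$-a.e., so that $\phc^{\pm}_{g}(q_{g})$ is well defined.

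Fix $\eps>0$. First I would extract a robust core of $\phc^{+}_{f}(q)$: writing $\phc^{+}_{f}(q)=\bigcup_{N\ge 1}A_{N}$, where $A_{N}$ is the set of $x$ such that $f^{k}(W^{+}_{f}(x))$ meets the local stable manifold $W^{s}_{\mathrm{loc}}(o(q))$ transversally for some $0\le k\le N$, I pick $N$ with $m(A_{N})>1-\eps/4$, and symmetrically a set $B_{N'}$ for $\phc^{-}_{f}(q)$. Then, by Pesin block theory for $f$, I would choose a compact set $\Lambda\subset A_{N}\cap B_{N'}$ with $m(\Lambda)>1-\eps/2$ on which the dominated splitting obeys uniform finite-time exponential estimates, the local Pesin discs $W^{\pm}_{\mathrm{loc},f}(x)$ have uniform size $\delta>0$ and vary $C^{1}$-continuously with $x\in\Lambda$, and the transverse intersections of the iterates of $W^{+}_{\mathrm{loc},f}(x)$ with $W^{s}_{\mathrm{loc}}(o(q))$ (and of $W^{-}_{\mathrm{loc},f}(x)$ with $W^{u}_{\mathrm{loc}}(o(q))$) occur with a uniform lower bound on the angle and with a uniformly bounded number of iterates.

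The final and decisive step is a perturbation argument. The orbit $o(q)$ is hyperbolic, hence $C^{1}$-robust, and $o(q_{g})$ together with $W^{s}_{\mathrm{loc}}(o(q_{g}))$ and $W^{u}_{\mathrm{loc}}(o(q_{g}))$ depend $C^{1}$-continuously on $g$; transversality is an open condition. The heart of the matter is to show that for $g$ sufficiently $C^{1}$-close to $f$ a definite portion $\Lambda'\subset\Lambda$, with $m(\Lambda\setminus\Lambda')<\eps/2$, still carries local Pesin manifolds $W^{\pm}_{\mathrm{loc},g}(x)$ of size $\ge\delta/2$ that are $C^{1}$-close to $W^{\pm}_{\mathrm{loc},f}(x)$; once this is granted, persistence of transversality gives $\Lambda'\subset\phc^{+}_{g}(q_{g})\cap\phc^{-}_{g}(q_{g})$ and hence $m(\phc_{g}(q_{g}))\ge m(\Lambda')>1-\eps$, yielding the neighborhood $\U(f)\subset\U_{0}$ claimed in the lemma. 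The hard part will be exactly this persistence: non-uniformly hyperbolic invariant manifolds are not robust under $C^{1}$ perturbations, so one cannot literally transport $W^{+}_{f}(x)$ to $W^{+}_{g}(x)$. The way around it is that on the block $\Lambda$ the contraction and expansion are uniform over a long but finite window and, crucially, dominated, and finite-window uniform domination together with the uniform transversality on $\Lambda$ are genuinely $C^{1}$-open conditions; feeding these estimates into the Hadamard–Perron graph-transform construction for $g$ produces the desired local manifolds, at the price of a measure-$\eps$ shrinking of the block. That shrinking comes from an Egorov-type argument (needed because $\Lambda$ is not invariant, so one must use the recurrence behaviour of $g$-orbits, controlled only off a small set), which is where the absolute continuity of $m$ and the fact that $g$ also preserves $m$ enter.
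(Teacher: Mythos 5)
The paper does not give a self-contained argument here: its ``proof'' is a two-line citation of Lemma~5.1 of \cite{AB2012} together with the fact that generically all periodic points of the same index are homoclinically related \cite{BC}. Your proposal attempts to reprove that lemma directly, and the attempt has a genuine gap precisely at the step you yourself flag as ``the hard part.'' Finite-window uniform domination and finite-time hyperbolicity estimates on a Pesin block $\Lambda$ of $f$ are indeed $C^{1}$-open, but they only control the $g$-orbit of $x\in\Lambda$ for a number of iterates depending on $d_{C^{1}}(f,g)$; to run the graph transform and obtain a $g$-invariant local manifold of uniform size $\delta/2$ at $x$ you need hyperbolicity estimates along the \emph{entire} (backward or forward) $g$-orbit of $x$, i.e.\ you need to know that the Lyapunov exponents of $g$ itself are bounded away from zero on a set of measure $>1-\eps$. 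Your ``Egorov-type argument'' using only that $g$ preserves $m$ cannot supply this: volume preservation and recurrence say nothing about whether the $g$-orbits keep accumulating expansion/contraction, and a global dominated splitting does not force nonzero exponents either. This is not a technicality --- it is exactly the phenomenon behind Bochi's theorem that a $C^{1}$-small perturbation can collapse Lyapunov exponents on a large-measure set when $f$ is \emph{not} a continuity point of the integrated exponents.

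The missing ingredient, and the actual content of \cite[Lemma 5.1]{AB2012}, is that the integrated sum of positive Lyapunov exponents $g\mapsto\int_{M}\sum_{i}\lambda_{i}^{+}(g,x)\,dm$ is upper semicontinuous in the $C^{1}$ topology, hence continuous at generic $f$; continuity at $f$, combined with the robust dominated splitting $E^{-}_{g}\oplus_{<}E^{+}_{g}$, forces every $C^{2}$ map $g$ near $f$ to have a nonuniformly hyperbolic set of measure $>1-\eps$ with uniform estimates, from which Pesin manifolds of uniform size are built \emph{for $g$ directly} rather than transported from $f$. Only then do the robustness of $o(q_{g})$, persistence of transversality, and the generic homoclinic relations give $m(\phc_{g}(q_{g}))>1-\eps$. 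Your argument never invokes this continuity (genericity enters your proof only through Theorem~\ref{teo.jana.acw}), so as written it cannot close. A secondary caveat: since $f$ is only $C^{1}$-generic, the Pesin blocks and invariant manifolds for $f$ in your first step must be justified via the dominated splitting as in \cite{ABC2011}, not via classical ($C^{1+\alpha}$) Pesin theory.
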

\begin{proof} It follows from Lemma 5.1. in \cite{AB2012} and the fact that generically in $\diff^{1}_{m}(M)$ all points of the same unstable index are homoclinically related \cite{BC}.
\end{proof}
We will also use the following:
\begin{theorem}\cite{HHU2008,ACW2017}\label{teo.acw.ph} The generic volume-preserving partially hyperbolic diffeomorphism is stably Bernoulli.
\end{theorem}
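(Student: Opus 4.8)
The plan is to derive this purely from the ingredients already assembled above: the $C^{1}$-generic dichotomy of Theorem~\ref{teo.jana.acw}, the $C^{1}$-stability of large Pesin homoclinic classes from Lemma~\ref{avila.bochi}, the $C^{1}$-generic stable ergodicity (and, underlying it, stable accessibility) of volume-preserving partially hyperbolic diffeomorphisms established in \cite{ACW2017}, and the Pesin spectral decomposition theorem stated above, whose Corollary~\ref{corollary.pesin} turns ``ergodic on every iterate plus hyperbolic $m$'' into the Bernoulli property. In the one-dimensional center case one could instead quote \cite{HHU2008} directly.

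First I would fix the neighborhood. By \cite{ACW2017}, for a $C^{1}$-generic partially hyperbolic $f$ there is a $C^{1}$-neighborhood $\U\subset\diff^{1}_{m}(M)$ of $f$ on which every $C^{2}$ volume-preserving diffeomorphism is accessible, and accessible $C^{2}$ volume-preserving partially hyperbolic diffeomorphisms are ergodic; since the stable and unstable foliations of $g^{n}$ coincide with those of $g$, $g^{n}$ is accessible as well, so $g^{n}$ is ergodic for every $g\in\U\cap\diff^{2}_{m}(M)$ and every $n\ge1$. Shrinking $\U$, and using that $f$ is generic, I may assume in addition that the conclusions of Theorem~\ref{teo.jana.acw} and Lemma~\ref{avila.bochi} hold for $f$, with associated hyperbolic periodic point $q$.

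Next I would show that $m$ is hyperbolic throughout $\U$. Since $f$ is partially hyperbolic, $E^{u}$ carries positive Lyapunov exponents, so $f$ lies in the non-degenerate branch of Theorem~\ref{teo.jana.acw}. Fix $\eps\in(0,1)$; by Lemma~\ref{avila.bochi}, after a further shrinking of $\U$ we get $m(\phc_{g}(q_{g}))>1-\eps$ for every $g\in\U\cap\diff^{2}_{m}(M)$, where $q_{g}$ is the continuation of $q$. In particular $m(\phc^{+}_{g}(q_{g}))>0$ and $m(\phc^{-}_{g}(q_{g}))>0$, so Theorem~\ref{criterion_hhtu} applies and yields $\phc_{g}(q_{g})\subset\nuh(g)$ up to an $m$-null set; hence $m(\nuh(g))>1-\eps$. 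As $\nuh(g)$ is $g$-invariant and $g$ is ergodic, $\nuh(g)\zeroeq M$, i.e.\ $m$ is hyperbolic for $g$. Now for each $g\in\U\cap\diff^{2}_{m}(M)$ the measure $m$ is hyperbolic and every iterate $g^{n}$ is ergodic, so the spectral decomposition theorem above (with $V=M$) together with Corollary~\ref{corollary.pesin} gives that $g$ is Bernoulli; thus $f$ is stably Bernoulli.

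The main obstacle is the combination carried out in the last two steps rather than any single estimate. The generic dichotomy of Theorem~\ref{teo.jana.acw} supplies a hyperbolic measure only at the generic $f$, and the classical Pugh--Shub/Burns--Wilkinson ergodicity theorem requires center bunching, which is not a generic condition; what makes the argument work is that \cite{ACW2017} provides stable ergodicity---and, through accessibility, ergodicity of every iterate---for $C^{1}$-generic partially hyperbolic diffeomorphisms with no bunching hypothesis, while Lemma~\ref{avila.bochi} together with ergodicity forces hyperbolicity of $m$ on a whole $C^{1}$-neighborhood. Only once both properties have been propagated uniformly over $\U$ can one invoke the Pesin spectral decomposition to pass from stable ergodicity to stable Bernoulliness; keeping all these residual and open conditions compatible on a single neighborhood is the delicate part.
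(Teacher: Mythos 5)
The paper gives no argument for this statement---it is a pure citation to \cite{HHU2008,ACW2017}---so I am judging your proof on its own terms. The structure (propagate ergodicity of all iterates and hyperbolicity of $m$ over a $C^{1}$-neighborhood, then apply the spectral decomposition and Corollary~\ref{corollary.pesin}) is reasonable, and your middle step is correct and nicely done: Lemma~\ref{avila.bochi} plus Theorem~\ref{criterion_hhtu} gives $m(\nuh(g))>1-\eps>0$, and ergodicity plus invariance of $\nuh(g)$ upgrades this to $\nuh(g)\zeroeq M$.

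The gap is in your first step. You attribute to \cite{ACW2017} the implication ``accessible $C^{2}$ volume-preserving partially hyperbolic $\Rightarrow$ ergodic'' with no bunching hypothesis, and you lean on it twice: once to get ergodicity of every $g\in\U\cap\diff^{2}_{m}(M)$, and again (via the fact that $g$ and $g^{n}$ have the same stable and unstable foliations) to get ergodicity of every iterate $g^{n}$. That implication is not what \cite{ACW2017} proves; it is essentially the still-open accessibility-implies-ergodicity half of the Pugh--Shub program. What \cite{ACW2017} establishes is $C^{1}$-density of stable ergodicity (which, combined with openness of stable ergodicity, gives genericity), and their route is a different ergodicity criterion: accessibility \emph{together with} a nonuniform hyperbolicity condition on the center (all center exponents of one sign on a positive-measure set), arranged by perturbation. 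So your claim that every iterate $g^{n}$ is ergodic has no support, and without it Corollary~\ref{corollary.pesin} cannot be invoked: stable ergodicity of $g$ alone leaves open the possibility that the spectral decomposition produces $k_{n}>1$ cyclically permuted pieces. The repair---and it is exactly what the paper does in Case 3 of the proof of Theorem~\ref{teo.stable.bernoulli}, where it only uses the stable-ergodicity content of this theorem---is to replace ``all iterates ergodic'' by Corollary~\ref{corollary.homoclinically.related}: by Theorem~\ref{teo.bonatti.crovisier} all points of $o(q)$ are generically homoclinically related, this persists on a $C^{1}$-neighborhood, and combined with $\phc_{g}(q_{g})\zeroeq M$ it yields Bernoulliness of $g$ directly. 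Alternatively, in the only situation where the paper actually uses this theorem (dimension~3, hence one-dimensional center), your route can be salvaged as you suggest by quoting \cite{HHU2008}, which does prove that accessibility implies the K-property, hence ergodicity of all powers.
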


\section{Stable Bernoulliness criterion} \label{section.minimality}
The following theorem provides a criterion to identify stably Bernoulli diffeomorphisms. We emphasize that this is not a result that requires $f$ to have generic properties, it holds for every diffeomorphism satisfying the hypothesis. This criterion is valid for compact manifolds of any dimension. 

\begin{theorem}\label{teo.no.generico} {\em (SB-criterion)} Let $f\in\diff^{1}_{m}(M)$ and $W$ an $f$-invariant expanding minimal foliation such that:
\begin{enumerate}
 \item there exists a $Df$-invariant sub-bundle of $TM$, $F$ such that the splitting $TM=F\oplus_< TW$ is dominated. 
 \item there exist a hyperbolic periodic point $p_{f}$ with unstable index $u(p_{f})=\dim TW$ and a $C^{1}$-neighborhood $\U(f)\subset\diff^{1}_{m}(M)$ such that for all $g\in \U(f)\cap \diff^{2}_{m}(M)$ $m(\phc^{-}(p_{g}))>0$, where $p_{g}$ is the analytic continuation of $p_{f}$.
\end{enumerate}
Then $f$ is stably Bernoulli. 
\end{theorem}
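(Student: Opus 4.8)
The plan is to produce a $C^1$-neighborhood $\U\subseteq\U(f)$ of $f$ such that every $g\in\U\cap\diff^2_m(M)$ meets the hypotheses of Corollary~\ref{corollary.homoclinically.related}: that $m(\phc_g(p_g))=1$ for the analytic continuation $p_g$ of $p_f$, and that all points of the orbit of $p_g$ are homoclinically related. Then each such $g$ is Bernoulli, which is precisely what it means for $f$ to be stably Bernoulli. Everything will be obtained by transporting the minimal expanding foliation $W$ from $f$ to $g$.

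First I would set up the persistence of the foliation. Because $\|Df^{-1}|_{TW}\|<1$ uniformly and $TM=F\oplus_<TW$, the foliation $W$ is normally hyperbolic — the domination supplies the rate gap — and plaque expansive, being uniformly expanded; by the Hirsch--Pugh--Shub theory of normally hyperbolic foliations there is a $C^1$-neighborhood of $f$ on which every $g$ carries a $g$-invariant foliation $W_g$ tangent to the continuation $E_g$ of $TW$, with $W_f=W$ and $W_g$ depending continuously on $g$ (leaves converging uniformly on compact sets, together with the local product charts). Since $E_g$ is still uniformly expanded by $Dg$, the leaves of $W_g$ are uniformly contracted by $g^{-1}$, so $W_g(x)\subseteq W^+_g(x)$ for every $x$. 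As $\dim E_g=\dim TW=u(p_f)=u(p_g)$ and $E_g$ is uniformly expanded, $E_g(p_g)$ is forced to be the unstable subspace of $p_g$; hence $W_g(g^k(p_g))=W^u_g(g^k(p_g))$ along the orbit, and at each $g^k(p_g)$ this leaf meets $W^s_g(g^k(p_g))$ transversally, in complementary dimension $\dim F$.

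Next I would prove that minimality is robust and convert density into intersections. By compactness of $M$, minimality of the continuous foliation $W$ is uniform: for a fixed small $\eps$ there is $R>0$ so that every leaf-ball $B_W(x,R)$ is $(\eps/2)$-dense in $M$; by continuous dependence, $B_{W_g}(x,R)$ stays within Hausdorff distance $\eps/2$ of $B_W(x,R)$ for $g$ close to $f$, hence is $\eps$-dense, and so every leaf of $W_g$ is dense. A foliation-box argument then gives: a dense leaf $L$ of $W_g$ enters any small foliation box of $W_g$ around $g^k(p_g)$, so it contains a full plaque there; as the local stable manifold of $g^k(p_g)$ is a $C^1$ section of that box (transverse to $TW_g$ at $g^k(p_g)$, of complementary dimension), that plaque crosses it transversally, so $L\transv W^s_g(o(p_g))\ne\emptyset$. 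Applying this to $L=W_g(x)$ for arbitrary $x$, the set $\{x:W_g(x)\transv W^s_g(o(p_g))\ne\emptyset\}$ is all of $M$; applying it to $L=W_g(g^l(p_g))=W^u_g(g^l(p_g))$ gives $W^u_g(g^l(p_g))\transv W^s_g(g^k(p_g))\ne\emptyset$ for all $k,l$, i.e.\ the orbit of $p_g$ is homoclinically related.

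Finally I would assemble the pieces. Since $W_g(x)\subseteq W^+_g(x)$, a transverse intersection of $W_g(x)$ with $W^s_g(o(p_g))$ is also one of $W^+_g(x)$ with $W^s_g(o(p_g))$; as the former occurs for every $x$, we obtain $m(\phc^+_g(p_g))=1$. Combined with $m(\phc^-_g(p_g))>0$ from hypothesis (2) and with $g\in\diff^2_m(M)$, Theorem~\ref{criterion_hhtu} yields $\phc^+_g(p_g)\zeroeq\phc^-_g(p_g)\zeroeq\phc_g(p_g)$, so $m(\phc_g(p_g))=1$ and $\phc_g(p_g)\subseteq\nuh(g)$; with the orbit of $p_g$ homoclinically related, Corollary~\ref{corollary.homoclinically.related} makes $g$ Bernoulli. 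As this holds for all $C^2$ conservative $g$ in the neighborhood, $f$ is stably Bernoulli. I expect the main obstacle to be the first two steps: guaranteeing that \emph{every} nearby $g$ still carries an invariant expanding foliation and that this foliation is again minimal. The persistence rests on checking normal hyperbolicity and plaque expansivity of $W$; the robustness of minimality, though soft once persistence and continuous dependence are in hand, has to be handled carefully in the topology on foliations.
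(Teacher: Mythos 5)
Your proposal is correct in substance and arrives at exactly the two facts the paper's proof needs --- $m(\phc^{+}_{g}(p_{g}))=1$ and the homoclinic relation of the orbit of $p_{g}$ for every $C^{2}$ conservative $g$ near $f$ --- followed by the same application of Theorem~\ref{criterion_hhtu} and Corollary~\ref{corollary.homoclinically.related}. Where you diverge is the robustness step. The paper never engages with minimality of the perturbed foliation: it sets $\Gamma(g)=M\setminus\phc^{u}_{g}(p_{g})$, notes that $g\mapsto\Gamma(g)$ is upper semicontinuous because a transverse intersection $W^{u}(x)\transv W^{s}(o(p))\ne\emptyset$ is an open condition in both $x$ and $g$, and concludes from $\Gamma(f)=\emptyset$ (which uses minimality of $W$ only for $f$ itself) that $\Gamma(g)=\emptyset$ on a whole $C^{1}$-neighborhood. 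You instead transport the foliation through the perturbation via normal hyperbolicity and prove a quantitative uniform density statement for the leaves of $W_{g}$; that is more work but yields more information, and it makes explicit the persistence of the unstable foliation that the paper uses only implicitly when it writes $W^{u}_{g}(x)$. One caveat: your step ``hence is $\eps$-dense, and so every leaf of $W_{g}$ is dense'' is not justified --- $\eps$-density for a single fixed $\eps$ does not give density, and robust minimality of an expanding foliation is genuinely not automatic (the neighborhood you produce shrinks as $\eps\to0$). Fortunately your proof never consumes full density: it suffices to fix $\eps$ smaller than the uniform inner size of the foliation boxes and local stable manifolds along $o(p_{g})$ (uniform over $g$ in the neighborhood), which is precisely what your foliation-box argument uses. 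Strike the claim that $W_{g}$ is minimal and the argument stands as written.
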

We denote by: 
$$\Gamma(f)=M \backslash \phc^u(p)$$

Here $\Gamma(f)$ is a compact, $f$-invariant and $u$-saturated subset of $M$. Also, if $f\in\diff^1_m(M)$ is ergodic then $m(\Gamma(f))=0$. For all $g \in \U(f)$ we define:
$$\Gamma(g):=M \backslash \phc^u_{g}(p_g)$$

\begin{proof}[Proof of Theorem \ref{teo.no.generico}]
Let us first see the next lemma.

\begin{lemma}\label{up.continuity} With the Hausdorff topology, the function $f \longmapsto \Gamma(f)$ is upper-semicontinuous, that is: if $f_n \stackrel{C^1}{\rightarrow} f$ then $\limsup_{n \to \infty} \Gamma(f_n) \subset \Gamma(f)$.\par
As a corollary, if $\Gamma(f)=\emptyset$, there is a $C^{1}$-neighborhood $\U$ of $f$ such that $\Gamma(g)=\emptyset$ for all $g\in\U$.
\end{lemma}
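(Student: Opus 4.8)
The plan is to deduce upper-semicontinuity of $f\mapsto\Gamma(f)=M\setminus\phc^u(p_f)$ from the robustness of transverse intersections, combined with the $C^1$-continuous dependence \emph{on compact pieces} of the two families of manifolds that enter the definition of $\phc^u$: the leaves of the expanding foliation $W^u=W$, and the stable manifold $W^s(o(p_f))$ of the (analytically continued) hyperbolic periodic orbit. Recall first that $TW$ is uniformly expanded by $Df$ and dominates $F$; such a sub-bundle persists under $C^1$-perturbations (domination is a $C^1$-open condition), remains uniformly expanded and dominating, and hence is uniquely integrable, so that for all $g$ in a $C^1$-neighborhood of $f$ the expanding foliation $W^u_g$ is well defined, with local plaques $W^u_{\mathrm{loc},g}(y)$ depending continuously, in the $C^1$ sense, jointly on the base point $y$ and on $g$ (Hadamard--Perron / Hirsch--Pugh--Shub theory of normally expanded bundles); and $W^u_f=W$. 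Likewise, $p_f$ admits an analytic continuation $p_g$ and, on any fixed compact piece, $W^s_g(o(p_g))\to W^s_f(o(p_f))$ in $C^1$ as $g\to f$. Finally, by hypothesis $(2)$ of the SB-criterion $\dim W^u=u(p_f)$, so $W^u(x)$ and $W^s(o(p_f))$ have complementary dimension; a transverse intersection between them is therefore an isolated point, and transversality at such a point is an open condition.

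Now let $x\in\limsup_{n\to\infty}\Gamma(f_n)$, so that, after passing to a subsequence, there are $x_n\in\Gamma(f_n)$ with $x_n\to x$. Suppose for contradiction that $x\notin\Gamma(f)$, i.e.\ $W^u_f(x)\transv W^s_f(o(p_f))\ne\emptyset$; fix a transverse intersection point $z$, lying on $W^s_f(f^j(p_f))$ for some $j\in\{0,\dots,\per(p_f)-1\}$. Since $z\in W^u_f(x)$, choose a finite plaque-chain inside the leaf $W^u_f(x)$ joining $x$ to $z$, the last plaque being a local plaque $P\subset W^u_f(x)$ containing $z$. Applying the continuous variation of the local plaques successively along this chain, starting from $x_n\to x$, we obtain for $n$ large a plaque $P_n\subset W^u_{f_n}(x_n)$ with $P_n\to P$ in $C^1$; and on a neighborhood of $z$ we have $W^s_{f_n}(f_n^{\,j}(p_{f_n}))\to W^s_f(f^j(p_f))$ in $C^1$. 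Because $P\transv W^s_f(f^j(p_f))$ at $z$ and transversality is open, for $n$ large $P_n$ meets $W^s_{f_n}(f_n^{\,j}(p_{f_n}))$ transversally near $z$; hence $W^u_{f_n}(x_n)\transv W^s_{f_n}(o(p_{f_n}))\ne\emptyset$, i.e.\ $x_n\notin\Gamma(f_n)$, a contradiction. Therefore $x\in\Gamma(f)$, proving the upper-semicontinuity.

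For the corollary, assume $\Gamma(f)=\emptyset$ and suppose $f$ were not interior to $\{g:\Gamma(g)=\emptyset\}$. Then there is a sequence $g_n\stackrel{C^1}{\rightarrow} f$ with $\Gamma(g_n)\ne\emptyset$; pick $x_n\in\Gamma(g_n)$. By compactness of $M$ a subsequence of $(x_n)$ converges to some $x\in M$, and then $x\in\limsup_n\Gamma(g_n)\subset\Gamma(f)=\emptyset$ by the first part, a contradiction. Hence $\Gamma(g)=\emptyset$ for every $g$ in a $C^1$-neighborhood $\U$ of $f$.

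The only delicate point is the $C^1$-continuous dependence of the expanding foliation $W^u$ on the base point and on the diffeomorphism, together with the bookkeeping needed to keep everything confined to compact plaque-chains (the leaves of $W$ are dense, so one cannot work with whole leaves); this is exactly where it is used that $TW$ is a uniformly expanded bundle dominating its $Df$-invariant complement $F$, which makes $W$ a robustly, uniquely integrable, normally expanded foliation whose local plaques vary continuously in the $C^1$ topology. Once this is in hand, the rest is the standard openness of transverse intersections of complementary-dimensional submanifolds.
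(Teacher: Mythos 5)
Your proof is correct and follows essentially the same route as the paper's: pass to a convergent subsequence $x_n\to x_*$, assume $x_*\notin\Gamma(f)$, and use the robustness of the transverse intersection $W^u(x_*)\transv W^s(o(p))$ under simultaneous perturbation of the base point and the diffeomorphism. The paper states this persistence in two lines without justification, whereas you supply the supporting details (persistence and $C^1$-continuity of the local plaques of the normally expanded foliation, continuation of $W^s(o(p_g))$ on compact pieces, and the plaque-chain bookkeeping), which is a welcome elaboration rather than a different argument.
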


\begin{proof}[Proof of Lemma]  By considering subsequences, it is enough to see that if $x_{n}\in\Gamma(f_{n})$ and $x_{n}\to x_{*}$, then $x_{*}\in\Gamma (f)$. If this were not the case, $W^{u}(x_{*})\transv W^{s}(o(p))\ne\emptyset$. This implies that there exist open sets $U\subset M$ and $\U(f)\subset \diff^{1}_{m}(M)$ such that $W^{u}_{g}(x)\transv W^{s}_{g}(p_{g})\ne\emptyset$ for all $x\in U$ and $g\in\U(f)$. This proves the lemma. 
\end{proof}
To prove that $f$ is stably Bernoulli, note that by hypothesis $W^u$ is an $f$-invariant expanding minimal foliation, therefore $\Gamma(f)= \emptyset$. This implies, by the upper-semicontinuity, that $\Gamma(g)= \emptyset$ in a $C^{1}$-neighborhood, which we still call $\U(f)$.\\
By definition of $\Gamma(g)$, $M=\phc^u(p_g)\zeroeq\phc^{+}(p_{g})$. Now, by hypothesis $m(\phc^{-}(p_{g}))>0$. Then, by \cite{HHTU11}, $m(\phc^{u}(p_{g})\cap \phc^{-}(p_{g}))=1$ and $g$ is ergodic. Now, minimality of $W^{u}_{f}$ implies that all points in $o_{f}(p)$ are homoclinically related. This persists in an open neighborhood of $f$. So, by Corollary \ref{corollary.homoclinically.related} the result follows.

\end{proof}
\section{Proof of Theorem \ref{teo.stable.bernoulli}} 
Let $\mathcal{R} \subset \diff^{1}_{m}(M)$ a residual set such that both Theorem \ref{teo.jana.acw} and Lemma \ref{avila.bochi} apply. 
Let $f\in \mathcal{R}$ be such that there exists an invariant expanding minimal foliation $W$. Let $TM=E^{-}\oplus E^{+}$ be the zipped Oseledets splitting, as in Theorem \ref{teo.jana.acw}. We divide the proof in cases.

\begin{case}\label{case1} $\dim E^{+}=2$, and the finest Oseledets splitting is $TM=E^{-}\oplus E^{+}$\\
We claim that generically in this case $TW=E^{+}$, whence the diffeomorphism is Anosov. Indeed, by \cite[Theorem C]{DPU1999}, generically either $f$ is partially hyperbolic, or there exists a hyperbolic periodic point $p$ with unstable index $u(p)=2$ and complex eigenvalues. Since the finest Oseledets splitting is $TM=E^{-}\oplus E^{+}$, $f$ is not partially hyperbolic (otherwise, this would produce three different Lyapunov exponents), so there exists a hyperbolic periodic point with complex eigenvalues. 
This precludes the existence of a one-dimensional expanding invariant foliation $W$, so the only possible case is that $\dim W=2$, therefore $f$ is Anosov and hence stably Bernoulli.\par
We remark that the work of \cite{DPU1999} is for robustly transitive diffeomorphisms, not for volume preserving diffeomorphisms. However, their perturbation techniques can be made volume preserving, so their result holds also in this setting. 
\end{case}
\begin{case} $\dim E^{+}=2$ and the Oseledets splitting is of the form $TM=E^{-}\oplus_{<} TW\oplus_{<} E_{1}^{+}$\\
 In this case $f$ is also Anosov and the result follows.
\end{case}
\begin{case} $\dim E^{+}=2$ and the Oseledets splitting is of the form $TM=E^{-}\oplus_{<}E^{+}_{2}\oplus_{<}TW$\\
By Theorem \ref{teo.jana.acw}, there exists a hyperbolic periodic point $q$ so that $\phc(q)\zeroeq M$. Since $f$ is volume preserving and the extreme bundles are one-dimensional, $f$ is partially hyperbolic. Since $f$ is generic, Theorem \ref{teo.acw.ph} implies that $f$ is stably ergodic. Lemma \ref{avila.bochi} implies that $\phc_{g}(q_{g})\zeroeq M$ for all $C^{2}$ diffeomorphisms $g$ in a $C^{1}$-neighborhood of $f$. Now Theorem \ref{teo.bonatti.crovisier} implies that all points in $o(q)$ are homoclinically related. This persists in a $C^{1}$-neighborhood of $f$, so Corollary \ref{corollary.homoclinically.related} implies that $f$ is actually stably Bernoulli.
\end{case}
\begin{case} $\dim E^{+}=1$. Then $E^{+}=TW$\\
Lemma \ref{avila.bochi} implies we are in the hypothesis of the SB-criterion (Theorem \ref{teo.no.generico}) and the result follows. 
\end{case}

\subsection*{Acknowledgements:} We want to thank R. Ures for useful comments and discussions that helped improve this paper .

\bibliographystyle{alpha}
\bibliography{2018minimalitytesis}

\end{document}